\numberwithin{equation}{section}
\newtheorem{theorem}{Theorem}[section]
\newtheorem{lem}[theorem]{Lemma}
\newtheorem{proposition}[theorem]{Proposition}
 \DeclareMathOperator{\Tr}{Tr}
\DeclareMathOperator{\Ad}{Ad} \DeclareMathOperator{\ad}{ad}
\title [A contraction of the principal series...]{A contraction of the principal series
by Berezin-Weyl quantization}
\author {Benjamin Cahen}
\address{Universit\'e de Metz, UFR-MIM\\
D\'epartement de math\'ematiques\\
LMMAS, ISGMP-B\^at. A\\
Ile du Saulcy 57045, Metz cedex 01, France.}
\email{cahen@univ-metz.fr}
\subjclass[2000]{81S10; 22E46; 22E45; 81R05.} \keywords{Contraction;
semisimple Lie group; semidirect product; Cartan motion group;
unitary representation; principal series; symplectomorphism;
coadjoint orbit; Weyl quantization; Berezin quantization.}
\begin{document}

\begin{abstract} We study a contraction of the principal series representations
of a noncompact semisimple Lie group to the unitary irreducible
representations of its Cartan motion group by means of the
Berezin-Weyl quantization on the coadjoint orbits associated with
these representations.

\end{abstract}

\maketitle
\section {Introduction}
In the pioneering paper \cite{IW}, In\"on\"u and Wigner introduced
the notion of contraction of Lie groups and of Lie group
representations on physical grounds: If two physical theories are
related by a limiting process, then the associated invariance groups
and their representations should be also related by a limiting
process called contraction. For example, the Galilei group is a
contraction, that is, a limiting case, of the Poincar\'e group and
the unitary irreducible representations of the Galilei group are
limits of unitary irreducible representations of the Poincar\'e
group \cite{IW}.

\par The systematic study of the contractions of Lie group representations
began with the work of Mickelsson and Niederle \cite{MN}. In
\cite{MN}, a proper definition of the contraction of unitary
representations of Lie groups was given for the first time. The
non-zero mass representations of the Euclidean group ${\mathbb
R}^{n+1}\rtimes SO(n+1)$ and the positive mass-squared
representations of the Poincar\'e group ${\mathbb R}^{n+1}\rtimes
SO_0(n,1)$ were obtained by contraction (i.e. as limits in the sense
defined in \cite{MN}) of the principal series representations of
$SO_0(n+1,1)$. These results were partially generalized by Dooley
and Rice in \cite{DoR1} and \cite{DoR2} by following an idea of
Mackey \cite{Ma}. In \cite{DoR2}, a contraction of the principal
series of a noncompact semisimple Lie group to the unitary
irreducible representations of its Cartan motion group was
established.

\par In fact, a contraction of Lie group representations provides
a link between the Harmonic Analysis on two different Lie groups. In
particular, contractions allow to recover some classical formulas of
the theory of special functions \cite{DoR1}, \cite{Ri}. Contractions
also permit to transfer results on $L^p$-multipliers from unitary
groups to Heisenberg groups \cite{DoG}, \cite{RiRu}.

\par In \cite{Do}, Dooley suggested interpreting contractions of
representations in the context of the Kirillov-Kostant method of
orbits \cite{Kir} and, in \cite{CDo}, Cotton and Dooley showed how
to describe contractions of representations by means of adapted Weyl
correspondences. The notion of adapted Weyl correspondence was
introduced in \cite{CaLMP} and \cite{CaCR}. Given a Lie group $G$
and a unitary irreducible representation $\pi$ of $G$ on a Hilbert
space $\mathcal H$, an adapted Weyl correspondence on a coadjoint
orbit $\mathcal O$ associated with $\pi$ by the Kirillov-Kostant
method of orbits is a linear isomorphism $W$ from a class of
functions on $\mathcal O$ (called symbols) onto a class of operators
on $\mathcal H$, which is adapted to $\pi$ in the following sense:
for each element $X$ of the Lie algebra of $G$, the function $\tilde
{X}$ defined on $\mathcal O$ by $\tilde {X}(\xi)=\langle
\xi,X\rangle $ is a symbol and the equality $W(i\tilde {X})=d\pi
(X)$ holds on a dense subspace of $\mathcal H$. A precise definition
of the notion of adapted Weyl correspondence can be found in
\cite{CaDGA}. Adapted Weyl correspondences have been constructed in
various situations, see the introduction of \cite{CaDGA}.

\par The
approach of \cite{CDo} is particularly efficient in the case when
the coadjoint orbits associated with the representations have
K\"ahlerian structures. In that case, the representation spaces are
reproducing kernel Hilbert spaces and the Berezin calculus generally
provides an adapted Weyl correspondence on the corresponding
coadjoint orbits \cite{CGR}. For example, in \cite{CaJAM},
\cite{CaCont} and \cite{CaCDS}, we used Berezin quantization in
order to establish contractions of the unitary irreducible
representations of a compact semisimple Lie group and of the
discrete series of a noncompact semisimple Lie group to the unitary
irreducible representations of a Heisenberg group.

\par In \cite{CDo}, the case of the contraction of the principal
series of $SL(2,{\mathbb R})$ to the unitary representations of
${\mathbb R}^2\rtimes SO(2)$ was treated by using the Weyl calculus.
In \cite{CaLie1}, the more complicated example of the contraction of
the principal series of $SO_0(n+1,1)$ to some unitary irreducible
representations of ${\mathbb R}^{n+1}\rtimes SO_0(n,1)$ was studied
similarly.

\par More generally, in the present paper, we apply the ideas of
\cite{CDo} to the study of the contraction of the principal series
representations of a noncompact semisimple Lie group $G$ to the
unitary irreducible representations of its Cartan motion group
$V\rtimes K$. We obtain very simple parametrizations of the
corresponding coadjoint orbits of $G$ and of $V\rtimes K$ by using
the method of \cite{CaDGA} which is based on the dequantization of
the representations by means of the Berezin-Weyl calculus introduced
in \cite{CaLMP}. This allows us to construct adapted Weyl
correspondences on these coadjoint orbits. Then we show how the
parametrizations of the orbits as well as the adapted Weyl
correspondences are related by the contraction process. In
particular, we get an infinitesimal version of the results of
\cite{DoR2} on the contraction of the principal series.

\par This paper is organized as follows. In Section 2 and Section 3,
we realize the representations of the principal series of $G$ and
the unitary irreducible representations of $V\rtimes K$ in
compatible ways and we compute the corresponding derived
representations. In Section 4, we introduce the Berezin-Weyl
calculus. In Section 5 and Section 6, we dequantize the
representations and then we obtain the parametrizations of the
associated orbits and the adapted Weyl correspondences. In Section
7, we recover a contraction result of \cite{DoR2} in the \lq
noncompact picture\rq \,(in the terminology of \cite{Kn}, Chapter
7). Finally, in Section 8, we show that the adapted Weyl
correspondences on the coadjoint orbits of $G$ and of $V\rtimes K$
associated with the representations are related by the contraction
process and we give a contraction result for the derived
representations.

\section {Principal series representations}
In this section, we first introduce some notation. Our main
references are \cite{Kn}, Chapter 7 and \cite{Wa}, Chapter 8. We
obtain a realization of the principal series representations which
is convenient for the study of contractions by modifying slightly
the standard \lq noncompact\rq \, realization \cite{Kn}, p. 169,
\cite{Wa}, Section 8.4.8 and we compute the corresponding derived
representations.

\par Let $G$ be a connected
noncompact semisimple real Lie group with finite center. Let
$\mathfrak g$ be the Lie algebra of $G$. We identify
$G$-equivariantly $\mathfrak g$ to its dual space ${\mathfrak
g}^{\ast}$ by using the Killing form $\beta$ of $\mathfrak g$
defined by $\beta (X,Y)=\Tr (\ad X \ad Y)$ for $X$ and $Y$ in
$\mathfrak g$. Let $\theta$ be a Cartan involution of $\mathfrak g$
and let $\mathfrak {g}=\mathfrak {k} \oplus V$ be the corresponding
Cartan decomposition of $\mathfrak g$. Let $K$ be the connected
compact (maximal) subgroup of $G$ with Lie algebra $\mathfrak k$.
Let $\mathfrak a$ be a maximal abelian subalgebra of $V$ and let $M$
be the centralizer of $\mathfrak a$ in $K$. Let $\mathfrak m$ denote
the Lie algebra of $M$. We can decompose $\mathfrak g$ under the
adjoint action of $\mathfrak a$:
\begin{equation*}\mathfrak{g} =\mathfrak{a} \oplus \mathfrak{m}
\oplus \sum_{\lambda\in \Delta}\mathfrak{g}_{\lambda}
\end{equation*}
where  $\Delta$ is the set of restricted roots. We fix a Weyl
chamber in $\mathfrak {a}$ and we denote by ${\Delta^+}$ the
corresponding set of positive roots. We set $\mathfrak{n}
=\sum_{\lambda \in {\Delta^+}}\mathfrak{g}_{\lambda}$ and $\bar
{\mathfrak n}=\sum_{\lambda \in {\Delta^+}}\mathfrak{g}_{-\lambda}$.
Then $\bar {\mathfrak n}=\theta ({\mathfrak n})$. Let $A$, $N$ and
$\bar N$ denote the analytic subgroups of $G$ with algebras
$\mathfrak {a}$, $\mathfrak {n}$ and $\bar {\mathfrak n}$,
respectively. We fix a regular element $\xi_1$ in $\mathfrak a$,
that is, $\lambda(\xi_1)\not = 0$ for each $\lambda\in \Delta$ and
an element $\xi_2$ in $\mathfrak m$. Let $\xi_0=\xi_1+\xi_2$. Denote
by $O(\xi_0)$ the orbit of $\xi_0$ in ${\mathfrak g}^{\ast}\simeq
{\mathfrak g}$ under the (co)adjoint action of $G$ and by $o(\xi_2)$
the orbit of $\xi_2$ in ${\mathfrak m}$ under the adjoint action of
$M$.

\par Let $\sigma$ be a unitary irreducible representation of $M$ on
a complex (finite-dimensional) vector space $E$. Henceforth we
assume that $\sigma$ is associated with the orbit $o(\xi_2)$ in the
following sense, see \cite{Wild}, Section 4. For a maximal torus $T$
of $M$ with Lie algebra $\mathfrak {t}$, $i\beta (\xi_2,\cdot)\in
i{\mathfrak t}^{\ast}$ is a highest weight for $\sigma$.

\par Now we consider the unitarily induced representation
\begin{equation*}\hat {\pi}=\rm {Ind}_{MAN}^{G}\,\left(\sigma \otimes \exp
({i\nu})\otimes 1_N\right)\end{equation*} where $\nu=\beta
(\xi_1,\cdot)\in {\mathfrak a}^{\ast}$. The representation $\hat
{\pi}$ lies in the unitary principal series of $G$ and is usually
realized on the space $L^2(\bar {N},E)$ which is the Hilbert space
completion of the space $C_0(\bar {N},E)$ of compactly supported
smooth functions $\phi:\bar{N}\rightarrow  E$ with respect to the
norm defined by
\begin{equation*}\Vert\phi\Vert^2=\int_{\bar N} \langle\phi\,(y)\,,\,\phi
(y)\rangle_E\,dy \end{equation*} where $dy$ is the Haar measure on
$\bar N$ normalized as follows. Let $(E_i)_{1\leq i\leq n}$ be an
orthonormal basis for $\bar {\mathfrak n}$ with respect to the
scalar product defined by $(\,Y,Z\,):=-\beta(Y,\theta (Z))$. Denote
by $(Y_1,Y_2,\ldots, Y_n)$ the coordinates of $Y\in \bar {\mathfrak
n}$ in this basis and let $dY=dY_1dY_2\ldots dY_n$ be the Euclidean
measure on $\bar {\mathfrak n}$. The exponential map $\exp$ is a
diffeomorphism from $\bar {\mathfrak n}$ onto $\bar {N}$ and we set
$dy=\log^{\ast}(dY)$ where $\log ={\exp }^{-1}$.

\par Recall that $\bar {N}MAN$ is a open dense subset of $G$. We
denote by $g=\bar {n}(g)m(g)a(g)n(g)$ the decomposition of $g\in
\bar {N}MAN$. For $g\in G$ the action of the operator ${\tilde
\pi}(g)$ is given by \begin{equation}\bigl({\tilde \pi
}(g)\phi\bigr)(y)=e^{-(\rho+i\nu)\log a(g^{-1}y)}{\sigma}\bigl
(m(g^{-1}y)\bigr)^{-1}\,\phi\bigr(
\bar{n}(g^{-1}y)\bigr)\end{equation} where $\rho (H):=\frac
{1}{2}\Tr _{\bar {\mathfrak n}} (\ad H )=\frac {1}{2}\sum_{\lambda
\in {\Delta^+}} \lambda$.

\par Recall that we have the Iwasawa decomposition $G=KAN$. We
denote by $g={\tilde k}(g){\tilde a}(g){\tilde n}(g)$ the
decomposition of $g\in G$.

\par In order to simplify the study of the contraction, we slightly
modify the preceding realization of $\hat \pi$ as follows. Let $I$
be the unitary isomorphism of $L^2(\bar {N},E)$ defined by
\begin{equation*} (I\phi)(y)=e^{-i\nu (\log {\tilde
a}(y))}\phi(y).\end{equation*} Then we introduce the realization
$\pi$ of $\hat {\pi}$ defined by $\pi (g):=I^{-1}\tilde {\pi}(g)I$
for each $g\in G$. We immediately obtain
\begin{align}\bigl({ \pi
}(g)\phi\bigr)(y)=e^{i\nu(\log {\tilde a}(y)-\log {\tilde a}({\bar
n}(g^{-1}y))}&e^{-(\rho+i\nu)\log a(g^{-1}y)}{\sigma}\bigl
(m(g^{-1}y)\bigr)^{-1}\\ &\phi\bigr( \bar{n}(g^{-1}y)\bigr).
\nonumber
\end{align}
Formula (2.2) can be simplified as follows. For $g\in G$ and $y\in
{\bar N}$, we can write
\begin{align*}&g^{-1}y={\bar n}(g^{-1}y)m(g^{-1}y)a(g^{-1}y)n(g^{-1}y)\\
&={\tilde k}({\bar n}(g^{-1}y)){\tilde a}({\bar n}(g^{-1}y)){\tilde
n}({\bar n}(g^{-1}y))m(g^{-1}y)a(g^{-1}y)n(g^{-1}y).\\ \end{align*}
Then we have
\begin{equation*}{\tilde a}(g^{-1}y)={\tilde a}({\bar
n}(g^{-1}y))a(g^{-1}y).\end{equation*} Hence we obtain
\begin{align}\bigl({ \pi
}(g)\phi\bigr)(y)=e^{i\nu(\log {\tilde a}(y)-\log {\tilde
a}(g^{-1}y))}&e^{-\rho(\log a(g^{-1}y))}{\sigma}\bigl
(m(g^{-1}y)\bigr)^{-1}\\ &\phi\bigr( \bar{n}(g^{-1}y)\bigr).
\nonumber
\end{align}
\par Now we give an explicit formula for the differential $d\pi$ of
$\pi$. Let us introduce some additional notation. If $H$ is a Lie
group and $X$ is an element of the Lie algebra of $H$ then we denote
by $X^+$ the right-invariant vector field generated by $X$, that is,
$X^+(h)=\frac {d}{dt}\,(\exp (tX))h \vert _{t=0}$ for $h\in H$. We
denote by $p_{\mathfrak a}$, $p_{\mathfrak m}$ and $p_{\bar
{\mathfrak n}}$ the projection operators of $\mathfrak g$ on
$\mathfrak a$, $\mathfrak m$ and $\bar {\mathfrak n}$ associated
with the decomposition ${\mathfrak g}={\bar {\mathfrak
n}}\oplus{\mathfrak m}\oplus{\mathfrak a}\oplus {\mathfrak n}$.
Moreover, we also denote by ${\tilde p}_{\mathfrak a}$ the
projection operator of $\mathfrak g$ on $\mathfrak a$ associated
with the decomposition ${\mathfrak g}= {\mathfrak k}\oplus{\mathfrak
a}\oplus {\mathfrak n}$. We need the following lemma.

\begin{lem} 1) For each $X \in {\mathfrak g}$ and each $y\in {\bar N}$, we have

\begin{align*}{\frac {d}{dt}}\,a(\exp(tX)y)\vert_{t=0}&=p_{\mathfrak{a}}(\Ad
(y^{-1})X)\\
{\frac {d}{dt}}\,m(\exp(tX)y)\vert_{t=0}&=p_{\mathfrak{m}}(\Ad
(y^{-1})X)\\
{\frac {d}{dt}}\,\bar {n}(\exp(tX)y)\vert_{t=0}&=\bigl(\Ad
(y)\,p_{\bar {\mathfrak{n}}}(\Ad (y^{-1})\,X)\bigr)^+(y).\\
\end{align*}

\noindent 2) For each $X\in {\mathfrak g}$ and each $g\in G$, we
have
\begin{equation*} {\frac {d}{dt}}\,{\tilde a}(\exp(tX)g)\vert_{t=0}=
\left({\tilde p}_{\mathfrak
a}\bigl(\Ad (\tilde {k}(g)^{-1})X\bigr )\right)^+({\tilde
a}(g)).\end{equation*}
\end{lem}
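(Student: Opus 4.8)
The plan is to reduce both parts to a single computation: differentiating the various Iwasawa-type projections along the flow of a right-invariant vector field. For part~1), fix $X\in\mathfrak g$ and $y\in\bar N$. Write $g(t)=\exp(tX)y$, which lies in the open dense set $\bar N MAN$ for $t$ small since $y\in\bar N$ and this set is open. Applying $\Ad$ to the decomposition $g(t)=\bar n(g(t))\,m(g(t))\,a(g(t))\,n(g(t))$ is awkward; instead I would work on the group directly. The cleaner route is to left-translate: put $h(t)=y^{-1}g(t)=y^{-1}\exp(tX)y=\exp(t\,\Ad(y^{-1})X)$, so at $t=0$ the curve $h(t)$ starts at the identity with velocity $Z:=\Ad(y^{-1})X\in\mathfrak g$. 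Decompose $Z=Z_{\bar{\mathfrak n}}+Z_{\mathfrak m}+Z_{\mathfrak a}+Z_{\mathfrak n}$ along $\mathfrak g=\bar{\mathfrak n}\oplus\mathfrak m\oplus\mathfrak a\oplus\mathfrak n$. Since $g(t)=y\,h(t)$ and $y\in\bar N$, the uniqueness of the $\bar N MAN$ decomposition gives $m(g(t))=m(h(t))$, $a(g(t))=a(h(t))$, $n(g(t))=n(h(t))$, while $\bar n(g(t))=y\,\bar n(h(t))$. Differentiating the decomposition of $h(t)$ at $t=0$ and projecting onto the factors, the first two displayed formulas follow immediately: $\frac{d}{dt}a(h(t))|_0=Z_{\mathfrak a}=p_{\mathfrak a}(\Ad(y^{-1})X)$ and similarly for $m$. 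For the $\bar n$-component, $\frac{d}{dt}\bar n(h(t))|_0=Z_{\bar{\mathfrak n}}=p_{\bar{\mathfrak n}}(\Ad(y^{-1})X)$ as a tangent vector at $e\in\bar N$; left-translating by $y$, the tangent vector at $y$ is obtained by applying $dL_y$, which on the right-invariant picture corresponds to $\xi\mapsto(\Ad(y)\xi)^+(y)$, giving the third formula $\bigl(\Ad(y)\,p_{\bar{\mathfrak n}}(\Ad(y^{-1})X)\bigr)^+(y)$.

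For part~2), the argument is the same with the Iwasawa decomposition $G=KAN$ in place of $\bar N MAN$, but now one cannot left-translate away the $K$-part since $g\in G$ is arbitrary. Instead set $g(t)=\exp(tX)g$ and write $g=\tilde k(g)\tilde a(g)\tilde n(g)$. Then $g(t)=\exp(tX)\tilde k(g)\tilde a(g)\tilde n(g)$, and one rewrites $\exp(tX)\tilde k(g)=\tilde k(g)\exp\bigl(t\,\Ad(\tilde k(g)^{-1})X\bigr)+o(t)$. Put $W:=\Ad(\tilde k(g)^{-1})X$ and decompose $W=W_{\mathfrak k}+W_{\mathfrak a}+W_{\mathfrak n}$ along $\mathfrak g=\mathfrak k\oplus\mathfrak a\oplus\mathfrak n$. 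Since $\tilde a(g)\tilde n(g)\in AN$ normalizes $AN$ and the decomposition is unique, the only contribution to $\tilde a$ comes from the $\mathfrak a$-component $W_{\mathfrak a}={\tilde p}_{\mathfrak a}(W)$, and it enters after left-translation by $\tilde k(g)$ on the $K$-side — but $\tilde a(\tilde k(g)\cdot h\cdot \tilde a(g)\tilde n(g))$ depends only on the $AN$-part of $h$, so differentiating yields $\frac{d}{dt}\tilde a(g(t))|_0=\bigl({\tilde p}_{\mathfrak a}(\Ad(\tilde k(g)^{-1})X)\bigr)^+(\tilde a(g))$, as claimed.

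The one point requiring care, and the main obstacle, is the bookkeeping of which translation produces the right-invariant vector field in the answer: one must track whether the infinitesimal $\mathfrak a$- or $\bar{\mathfrak n}$-component sits at the identity of the relevant subgroup and then gets pushed forward to the base point $y$ (resp.\ $\tilde a(g)$) by a left translation, and verify that $dL_y$ acting on a tangent vector $\xi$ at the identity is indeed $(\Ad(y)\xi)^+(y)$ in the right-invariant notation $X^+(h)=\frac{d}{dt}(\exp(tX))h|_{t=0}$ fixed in the paper. A useful consistency check is that in part~1) the $\mathfrak a$- and $\mathfrak m$-components land in the abelian/central pieces $A$ and (the relevant part of) $M$, where left and right translation agree up to $\Ad$, so no vector-field decoration survives — matching the stated formulas — whereas the $\bar{\mathfrak n}$-component, living in the nonabelian $\bar N$, does carry the $\bigl(\Ad(y)(\cdot)\bigr)^+(y)$ decoration. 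Once this translation bookkeeping is pinned down, both formulas drop out of the uniqueness of the respective global decompositions and the chain rule, with no further computation needed.
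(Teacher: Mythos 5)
Your proof is correct and takes essentially the same approach as the paper: the paper differentiates the multiplication map $\mu:\bar N\times M\times A\times N\to\bar N MAN$ at $(y,e,e,e)$ and inverts it, which amounts to the same left-translation-to-the-identity bookkeeping you carry out, and part 2) is handled analogously via $K\times A\times N\to G$. One small remark: in your closing consistency check, the reason the $\mathfrak a$- and $\mathfrak m$-components carry no $(\cdot)^+$-decoration is not that $A$ and $M$ are abelian or central, but simply that the curves $t\mapsto a(\exp(tX)y)$ and $t\mapsto m(\exp(tX)y)$ pass through the identity at $t=0$ (since $y\in\bar N$), whereas $t\mapsto\bar n(\exp(tX)y)$ passes through $y$; this does not affect the validity of your argument.
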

\begin{proof} To prove 1), we consider the diffeomorphism
$\mu:\,\bar{N}\times M\times A\times N \rightarrow \bar{N}MAN$
defined by $\mu (y,m,a,n)=yman$. For $y\in \bar {N}$, $Y\in \bar
{\mathfrak{n}}$, $U\in \mathfrak{m}$, $H\in \mathfrak{a}$ and $Z\in
\mathfrak{n}$, we have \begin{align} d\mu(y,e,e,e)&(Y^+(y),U,H,Z)
\\&=
\frac{d}{dt}\, \exp (tY)y\exp (tU)\exp (tH)\exp (tZ)\Bigl\vert_{t=0}\nonumber\\
&=\bigl(Y+ \Ad (y) (U+H+Z)\bigr)^+(y).\nonumber\\ \nonumber
\end{align} Now, let $X\in {\mathfrak g}$. We can write $\Ad
(y^{-1})X=Y_0+U+H+Z$ where $Y_0\in \bar {\mathfrak{n}}$, $U\in
\mathfrak{m}$, $H\in \mathfrak{a}$ and $Z\in \mathfrak{n}$. Then
Equality (2.4) implies that $d\bar{n}(y)(X^+(y))=(\Ad (y)
\,Y_0)^+(y)$. This proves the last equality of 1). The other
equalities are proved similarly. Finally, we prove 2) analogously.
\end{proof}
From this lemma, we immediately deduce the following proposition.

\begin{proposition} For $X\in {\mathfrak g}$,
$\phi \in C_0(\bar {N},E)$ and $y\in {\bar N}$, we have
\begin{align*}(d\pi&(X)\phi)(y)=i\nu \Bigl({\tilde p}_{\mathfrak
a}\bigl(\Ad (\tilde {k}(y)^{-1})X\bigr ) \Bigr)\phi(y)\\&+ \rho
\bigl( p_{\mathfrak{a}}(\Ad (y^{-1})X)\bigr)\phi (y)+d\sigma \bigl(
p_{\mathfrak{m}}(\Ad (y^{-1})X)\bigr)\phi (y)\\&-d\phi (y) \bigl(\Ad
(y)\,p_{\bar {\mathfrak{n}}}(\Ad (y^{-1})\,X)\bigr)^+(y).\\
\end{align*} \end{proposition}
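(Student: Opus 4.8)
The plan is to differentiate formula (2.3) at the identity, i.e. to compute $\frac{d}{dt}\bigl(\pi(\exp tX)\phi\bigr)(y)\big|_{t=0}$ term by term, and to recognize each resulting derivative via Lemma 2.1. Writing $g=\exp(tX)$, note that $g^{-1}y=\exp(-tX)y$, so every quantity of the form $a(g^{-1}y)$, $m(g^{-1}y)$, $\bar n(g^{-1}y)$, $\tilde a(g^{-1}y)$ that appears in (2.3) is of the type covered by Lemma 2.1, but with $X$ replaced by $-X$; at $t=0$ we have $g^{-1}y=y$, so $a(y)=m(y)=e$ and $\bar n(y)=y$. Thus the chain rule applied to (2.3) produces a sum of four contributions, one from each factor.

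I would organize the four terms as follows. First, $e^{-\rho(\log a(g^{-1}y))}$ has value $1$ at $t=0$ and its $t$-derivative is $-\rho\bigl(\frac{d}{dt}\log a(\exp(-tX)y)|_{t=0}\bigr)=-\rho\bigl(p_{\mathfrak a}(\Ad(y^{-1})(-X))\bigr)=\rho\bigl(p_{\mathfrak a}(\Ad(y^{-1})X)\bigr)$, using part 1) of the lemma; this gives the $\rho$-term. Second, $\sigma\bigl(m(g^{-1}y)\bigr)^{-1}$ is the identity operator at $t=0$, and differentiating gives $-d\sigma\bigl(\frac{d}{dt}m(\exp(-tX)y)|_{t=0}\bigr)=-d\sigma\bigl(p_{\mathfrak m}(\Ad(y^{-1})(-X))\bigr)=d\sigma\bigl(p_{\mathfrak m}(\Ad(y^{-1})X)\bigr)$, the $d\sigma$-term. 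Third, the exponential phase $e^{i\nu(\log\tilde a(y)-\log\tilde a(g^{-1}y))}$ equals $1$ at $t=0$ (since $g^{-1}y=y$ there), and only the second summand in the exponent depends on $t$; its derivative is $-i\nu\bigl(\frac{d}{dt}\log\tilde a(\exp(-tX)y)|_{t=0}\bigr)$, which by part 2) of the lemma (with $g=y$, whence $\tilde k(y)$ is the $K$-part of $y$) equals $-i\nu\bigl({\tilde p}_{\mathfrak a}(\Ad(\tilde k(y)^{-1})(-X))\bigr)=i\nu\bigl({\tilde p}_{\mathfrak a}(\Ad(\tilde k(y)^{-1})X)\bigr)$, the first term of the proposition. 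Finally, the argument $\bar n(g^{-1}y)$ inside $\phi$ contributes $d\phi(y)$ applied to $\frac{d}{dt}\bar n(\exp(-tX)y)|_{t=0}=\bigl(\Ad(y)\,p_{\bar{\mathfrak n}}(\Ad(y^{-1})(-X))\bigr)^+(y)=-\bigl(\Ad(y)\,p_{\bar{\mathfrak n}}(\Ad(y^{-1})X)\bigr)^+(y)$, again by part 1); this yields the last term with its minus sign.

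Adding the four pieces gives exactly the stated formula, so the proof is essentially an application of Leibniz's rule together with Lemma 2.1. The only point requiring a little care — and the place where sign errors are easiest — is keeping track of the substitution $X\mapsto -X$ coming from $g^{-1}=\exp(-tX)$, together with the inverse on $\sigma\bigl(m(g^{-1}y)\bigr)^{-1}$, which contributes a further sign; these two signs combine so that the $\rho$- and $d\sigma$-terms come out positive while the $d\phi$-term and the $\nu$-term retain the signs dictated by (2.3). One should also check that $C_0(\bar N,E)$ is invariant under $\pi(\exp tX)$ so that the difference quotient makes sense and the differentiation under no further justification is needed; this follows from the smoothness of the Iwasawa and $\bar NMAN$ decompositions on the relevant open dense sets, exactly as in the references \cite{Kn}, \cite{Wa}. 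Hence the proposition follows immediately from Lemma 2.1.
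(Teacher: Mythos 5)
Your proof is correct and follows exactly the route the paper intends: the paper states that Proposition 2.2 is "immediately deduced" from Lemma 2.1, and your argument is precisely the Leibniz-rule differentiation of formula (2.3) at $t=0$, applying the two parts of Lemma 2.1 with $X$ replaced by $-X$. The sign bookkeeping you carry out (the $-X$ from $g^{-1}=\exp(-tX)$, the extra sign from inverting $\sigma(m(g^{-1}y))$, and the sign in the exponent $-\rho$) is exactly what is needed, and all four terms come out matching the statement.
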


\section {Representations of the Cartan motion group}
We retain the notation from Section 2. In particular, we have the
Cartan decomposition ${\mathfrak g}={\mathfrak k}\oplus V$ where $V$
is the orthogonal complement of ${\mathfrak k}$ in ${\mathfrak g}$
with respect to the Killing form $\beta$. We denote by
$p^c_{\mathfrak k}$ and $p^c_V$ the projections of ${\mathfrak g}$
on ${\mathfrak k}$ and $V$ associated with the Cartan decomposition.

\par We form the semidirect product $G_0:=V\rtimes K$. The group
law of $G_0$ is given by
\begin{equation*}(v,k).(v',k')=(v+\Ad (k)v',kk')\end{equation*}
for $v,\,v'$ in $V$ and $k,\,k' \in K$. The Lie algebra ${\mathfrak
g}_0$ of $G_0$ is the space $V\times {\mathfrak k}$ endowed with the
Lie bracket
\begin{equation*}[(w,U),(w',U')]_0=([U,w']-[U',w],[U,U'])
\end{equation*}
for $w,\,w'$ in $V$ and $U,\,U'$ in $\mathfrak k$.

\par Recall that $\beta$ is positive definite on $V$ and negative
definite on ${\mathfrak k}$ \cite{Hel}, p. 184. Then, by using
$\beta$, we can identify $V^{\ast}$ to $V$ and ${\mathfrak
k}^{\ast}$ to ${\mathfrak k}$, hence ${\mathfrak g}^{\ast}_0\simeq
V^{\ast}\times {\mathfrak k}^{\ast}$ to $V\times {\mathfrak k}$.
Under this identification, the coadjoint action of $G_0$ on
${\mathfrak g}^{\ast}_0\simeq V\times {\mathfrak k}$ is then given
by
\begin{equation*} (v,k)\cdot (w,U)=(\Ad(k)w, \Ad(k)U+[v,\Ad (k)w])
\end{equation*} for $v,\,w$ in $V$, $k$ in $K$ and $U$ in $\mathfrak
k$. This is a particular case of the general formula for the
coadjoint action of a semidirect product, see for instance
\cite{Rawn}.

\par We need the following lemma.
\begin{lem} For each regular element $\xi_1$ of ${\mathfrak a}$, the
space $\ad \xi_1\,(V)$ is the orthogonal complement of ${\mathfrak
m}$ in ${\mathfrak k}$. \end{lem}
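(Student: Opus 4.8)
The plan is to compute $\ad\xi_1(V)$ explicitly from the restricted root space decomposition $\mathfrak g=\mathfrak a\oplus\mathfrak m\oplus\sum_{\lambda\in\Delta}\mathfrak g_\lambda$ and the Cartan involution $\theta$. Since $\mathfrak a\subseteq V$, the Cartan relations $[\mathfrak k,V]\subseteq V$ and $[V,V]\subseteq\mathfrak k$ show that $\ad\xi_1$ interchanges $\mathfrak k$ and $V$; in particular $\ad\xi_1(V)\subseteq\mathfrak k$. Because $\theta$ fixes $\mathfrak a$ and $\mathfrak m$ and sends $\mathfrak g_\lambda$ onto $\mathfrak g_{-\lambda}$, one has the $\theta$-stable decompositions $V=\mathfrak a\oplus\sum_{\lambda\in\Delta^+}V_\lambda$ and $\mathfrak k=\mathfrak m\oplus\sum_{\lambda\in\Delta^+}\mathfrak k_\lambda$, where $V_\lambda=(1-\theta)\mathfrak g_\lambda$, $\mathfrak k_\lambda=(1+\theta)\mathfrak g_\lambda$, and the maps $(1-\theta)$ and $(1+\theta)$ send $\mathfrak g_\lambda$ bijectively onto $V_\lambda$ and $\mathfrak k_\lambda$ respectively for $\lambda\in\Delta^+$.

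The key step is the bracket computation: for $X\in\mathfrak g_\lambda$ with $\lambda\in\Delta^+$ we have $[\xi_1,X]=\lambda(\xi_1)X$ and $[\xi_1,\theta X]=-\lambda(\xi_1)\theta X$, hence $\ad\xi_1\bigl((1-\theta)X\bigr)=\lambda(\xi_1)(1+\theta)X$. Since $\xi_1$ is regular, $\lambda(\xi_1)\neq0$ for every $\lambda\in\Delta$, so this identity shows that $\ad\xi_1$ maps $V_\lambda$ onto $\mathfrak k_\lambda$; as $\ad\xi_1$ annihilates $\mathfrak a$, we obtain $\ad\xi_1(V)=\sum_{\lambda\in\Delta^+}\mathfrak k_\lambda$.

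It remains to identify $\sum_{\lambda\in\Delta^+}\mathfrak k_\lambda$ with the orthogonal complement of $\mathfrak m$ in $\mathfrak k$ for the inner product $(\,Y,Z\,)=-\beta(Y,\theta(Z))$ of Section~2. By the decomposition $\mathfrak k=\mathfrak m\oplus\sum_{\lambda\in\Delta^+}\mathfrak k_\lambda$ this sum is already a vector space complement of $\mathfrak m$ in $\mathfrak k$, so it suffices to prove orthogonality. For $U\in\mathfrak m$ and $X\in\mathfrak g_\lambda$, $(\,U,(1+\theta)X\,)=-\beta\bigl(U,(1+\theta)X\bigr)$ since $\mathfrak k_\lambda$ is $\theta$-fixed, and $\beta(U,X)=\beta(U,\theta X)=0$ because $U$ lies in the zero restricted root space $\mathfrak a\oplus\mathfrak m$ while $X\in\mathfrak g_\lambda$, $\theta X\in\mathfrak g_{-\lambda}$, and root spaces for restricted roots with nonzero sum are $\beta$-orthogonal. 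This establishes the claim.

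A slightly shorter, essentially equivalent route: $\ad\xi_1$ is symmetric for $(\,\cdot,\cdot\,)$ (using the invariance of $\beta$ and $\theta\xi_1=-\xi_1$), so its image is the orthogonal complement of its kernel; by regularity the kernel is the centralizer $\mathfrak a\oplus\mathfrak m$ of $\xi_1$, so the image is $\sum_{\lambda\in\Delta}\mathfrak g_\lambda$, and intersecting with $\mathfrak k$ (using once more that $\ad\xi_1$ exchanges $\mathfrak k$ and $V$, and that $\mathfrak k$ and $\mathfrak a$ are $(\,\cdot,\cdot\,)$-orthogonal) gives the statement. In either form there is no serious obstacle; the only points demanding care are the bookkeeping with $\theta$ and using the regularity of $\xi_1$ at exactly the right place, namely to ensure that $\ad\xi_1$ kills no root space $\mathfrak g_\lambda$.
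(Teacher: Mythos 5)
Your main argument is correct and is essentially the paper's own proof: both decompose $V=\mathfrak a\oplus\sum_{\lambda\in\Delta^+}(1-\theta)\mathfrak g_\lambda$ and $\mathfrak k=\mathfrak m\oplus\sum_{\lambda\in\Delta^+}(1+\theta)\mathfrak g_\lambda$, use the bracket identity $\ad\xi_1\bigl((1-\theta)X\bigr)=\lambda(\xi_1)(1+\theta)X$, invoke regularity to see that no root space is killed, and check $\beta$-orthogonality of $\sum_{\lambda\in\Delta^+}(1+\theta)\mathfrak g_\lambda$ to $\mathfrak m$; you are merely a little more explicit about why the orthogonality holds (restricted root spaces with distinct roots are $\beta$-orthogonal), which the paper leaves implicit. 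Your second, shorter route --- that $\ad\xi_1$ is symmetric for $(\,\cdot\,,\cdot\,)$, so its image is the orthocomplement of its kernel $\mathfrak a\oplus\mathfrak m$, intersected with $\mathfrak k$ --- is a genuinely different and slicker argument not in the paper; it trades the root-space bookkeeping for the single observation of self-adjointness plus the identification of the centralizer, at the small cost of needing to note that $\ad\xi_1$ interchanges $\mathfrak k$ and $V$ so that the image decomposes compatibly with $\mathfrak g=\mathfrak k\oplus V$.
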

\begin{proof} For each $\lambda \in {\Delta^+}$, let $E_{\lambda}\not= 0$
be in ${\mathfrak g}_{\lambda}$. Note that the space $p^c_{\mathfrak
k}({\mathfrak n})=p^c_{\mathfrak k}({\bar {\mathfrak n}})$ is
generated by the elements $E_{\lambda}+\theta (E_{\lambda})$ and
hence orthogonal to $\mathfrak m$. Now, by applying $p^c_{\mathfrak
k}$ to the decomposition ${\mathfrak g}={\mathfrak m}+{\mathfrak
a}+{\mathfrak n}+{\bar {\mathfrak n}}$ we get ${\mathfrak
k}={\mathfrak m}+p^c_{\mathfrak k}({\mathfrak n})$. This shows that
$p^c_{\mathfrak k}({\mathfrak n})$ is the orthogonal complement of
${\mathfrak m}$ in ${\mathfrak k}$. On the other hand, by applying
$p^c_V$ to the preceding decomposition of $\mathfrak g$, we obtain
$V={\mathfrak a}+p^c_V({\mathfrak n})$. Since $p^c_V({\mathfrak n})$
is generated by the elements $E_{\lambda}-\theta(E_{\lambda})$, the
space $\ad \xi_1\,(V)$ is then generated by the elements
\begin{equation*}\ad \xi_1\,(E_{\lambda}-\theta(E_{\lambda}))=\lambda
(\xi_1)(E_{\lambda}+\theta(E_{\lambda}))\end{equation*} where
$\lambda (\xi_1)\not=0$ for $\lambda \in {\Delta}$. Hence $\ad
\xi_1\,(V)=p^c_{\mathfrak k}({\mathfrak n})$ is the orthogonal
complement of $\mathfrak m$ in $\mathfrak k$.
\end{proof}
\par The coadjoint orbits of the semidirect product of a Lie group
by a vector space were described by Rawnsley in \cite{Rawn}. For
each $(w,U)\in {\mathfrak g}^{\ast}_0\simeq {\mathfrak g}_0$, we
denote by $O(w,U)$ the orbit of $(w,U)$ under the coadjoint action
of $G_0$. The following lemma shows that, for almost all $(w,U)$,
the orbit $O(w,U)$ is of the form $O(\xi_1,\xi_2)$ with $\xi_1\in
{\mathfrak a}$ and $\xi_2\in {\mathfrak m}$.

\begin{lem} 1) Let $\mathcal O$ be a coadjoint orbit for the
coadjoint action of $G_0$ on ${\mathfrak g}^{\ast}_0\simeq
{\mathfrak g}_0$. Then there exists an element of $\mathcal O$ of
the form $(\xi_1, U)$ with $\xi_1\in{\mathfrak a}$. Moreover, if
$\xi_1$ is regular then there exists $\xi_2\in {\mathfrak m}$ such
that $(\xi_1,\xi_2)\in {\mathcal O}$.

\noindent 2) Let $\xi_1$ be a regular element of $\mathfrak a$. Then
$M$ is the stabilizer of $\xi_1$ in $K$. \end{lem}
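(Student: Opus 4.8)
The plan is to establish the two statements separately, using the explicit formula for the coadjoint action of $G_0$ recalled above, together with the preceding lemma and the restricted root space decomposition of $\mathfrak g$.

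For 1), I would start from an arbitrary representative $(w,U)\in\mathcal O$ and first normalize the $V$-component. Since $\mathfrak a$ is a maximal abelian subspace of $V$, every element of $V$ is $\Ad(K)$-conjugate to an element of $\mathfrak a$ (see \cite{Hel}), so there is $k_0\in K$ with $\Ad(k_0)w=:\xi_1\in\mathfrak a$; then $(0,k_0)\cdot(w,U)=(\xi_1,\Ad(k_0)U)$ lies in $\mathcal O$ and has the required form $(\xi_1,U')$ with $\xi_1\in\mathfrak a$. If moreover this $\xi_1$ is regular, I act further by the elements $(v,e)$, $v\in V$: the coadjoint formula gives $(v,e)\cdot(\xi_1,U')=(\xi_1,U'+[v,\xi_1])$, so the $\mathfrak a$-component is unchanged while $U'$ is altered by an arbitrary element of $[V,\xi_1]=\ad\xi_1(V)$. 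By the preceding lemma this space is precisely the orthogonal complement of $\mathfrak m$ in $\mathfrak k$, so choosing $v$ with $[v,\xi_1]$ equal to minus the component of $U'$ orthogonal to $\mathfrak m$ yields an element $(\xi_1,\xi_2)\in\mathcal O$ with $\xi_2\in\mathfrak m$.

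For 2), I first compute the infinitesimal stabilizer: in the decomposition $\mathfrak g=\mathfrak a\oplus\mathfrak m\oplus\sum_{\lambda\in\Delta}\mathfrak g_\lambda$, $\ad\xi_1$ vanishes on $\mathfrak a\oplus\mathfrak m$ and acts on $\mathfrak g_\lambda$ as the nonzero scalar $\lambda(\xi_1)$, so $\ker(\ad\xi_1)=\mathfrak a\oplus\mathfrak m$; intersecting with $\mathfrak k$ (and using $\mathfrak a\subseteq V$, $\mathfrak m\subseteq\mathfrak k$, $V\cap\mathfrak k=0$) gives that the centralizer of $\xi_1$ in $\mathfrak k$ equals $\mathfrak m$. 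Hence the closed subgroup $S:=\{k\in K:\Ad(k)\xi_1=\xi_1\}$ has Lie algebra $\mathfrak m$. Since $M$ is the centralizer of $\mathfrak a$ in $K$, it clearly lies in $S$, and it remains to prove $S\subseteq M$. If $k\in S$ then $\Ad(k)$ commutes with $\ad(\Ad(k)\xi_1)=\ad\xi_1$, hence preserves $\ker(\ad\xi_1)=\mathfrak a\oplus\mathfrak m$, and, coming from $K$, preserves the Cartan decomposition, so it preserves $(\mathfrak a\oplus\mathfrak m)\cap V=\mathfrak a$; thus $k$ normalizes $\mathfrak a$ and its class in the Weyl group $W=N_K(\mathfrak a)/M$ fixes the regular element $\xi_1$. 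Since $W$ permutes the Weyl chambers simply transitively, the only element of $W$ fixing a regular element of $\mathfrak a$ is the identity, so $\Ad(k)|_{\mathfrak a}=\id$, i.e. $k\in M$, and therefore $S=M$.

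The coadjoint-action computation and the cancellation argument in 1) are routine; the step that really needs care is the group-level identification in 2), because knowing that the centralizer of $\xi_1$ in $\mathfrak k$ is $\mathfrak m$ only yields that $S$ and $M$ have the same identity component, and it is the Weyl-group argument that upgrades this to the equality $S=M$. I would also spell out the elementary fact used in 1) that $\mathfrak a$ meets every $\Ad(K)$-orbit in $V$.
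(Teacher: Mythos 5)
Your proof is correct and follows the paper's approach in both parts: for 1), conjugate the $V$-component into $\mathfrak a$ by $K$ and then use Lemma 3.1 to absorb the part of the $\mathfrak k$-component orthogonal to $\mathfrak m$ by acting with $(v,e)$; for 2), identify the centralizer $\mathfrak g(\xi_1)=\mathfrak a\oplus\mathfrak m$ and show any $k$ fixing $\xi_1$ preserves $\mathfrak a$. The one place you are genuinely more careful is the end of part 2): after deducing that $\Ad(k)$ leaves $\mathfrak a$ invariant, the paper concludes ``Hence $k\in M$'' without further comment, which only gives $k\in N_K(\mathfrak a)$ as stated; you correctly supply the missing step, namely that the induced element of the Weyl group $W=N_K(\mathfrak a)/M$ fixes the regular element $\xi_1$, hence fixes its chamber, hence is the identity by simple transitivity, so that $\Ad(k)$ actually centralizes $\mathfrak a$ and $k\in M$.
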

\begin{proof} 1) Let $(w,U)\in {\mathcal O}$. For each $k\in K$ we have
$(0,k)\cdot (w,U)=(\Ad(k)w,\Ad(k)U)$. By \cite{Kn}, p. 120, we can
choose $k\in K$ so that $\Ad(k)w\in {\mathfrak a}$. We set
$\xi_1:=\Ad(k)w$. If we assume that $\xi_1$ is regular then by Lemma
3.1 we can write $U=\xi_2+[\xi_1,v]$ where $\xi_2\in {\mathfrak m}$
and $v\in V$. Then $(\xi_1,U)=(v,e)\cdot (\xi_1,\xi_2)$. Hence
${\mathcal O}=O(\xi_1,\xi_2)$.

\noindent 2) Denote by $K(\xi_1)$ the stabilizer of $\xi_1$ in $K$
and by ${\mathfrak g}(\xi_1)$ the centralizer of $\xi_1$ in
$\mathfrak g$. Since $\xi_1$ regular, we have ${\mathfrak
g}(\xi_1)={\mathfrak a}\oplus {\mathfrak m}$ \cite{Hel}, p. 263.

Let $k\in K(\xi_1)$. Then $\Ad (k)$ leaves ${\mathfrak g}(\xi_1)$
invariant. Thus ${\mathfrak g}(\xi_1)\cap V={\mathfrak a}$ is also
invariant under $\Ad (k)$. Hence $k\in M$. This shows that
$K(\xi_1)\subset M$. Finally $K(\xi_1)=M$.
\end{proof}

\par In the rest of the section, we consider the orbit
$O(\xi_1,\xi_2)$ of $(\xi_1,\xi_2)\in {\mathfrak a}\times {\mathfrak
m}\subset {\mathfrak g}^{\ast}_0\simeq {\mathfrak g}_0$ under the
coadjoint action of $G_0$. As in Section 2, we assume that $\xi_1$
is a regular element of $\mathfrak a$ and that the adjoint orbit
$o(\xi_2)$ of $\xi_2$ in $\mathfrak m$ is associated with a unitary
irreducible representation $\sigma$ of $M$ which is realized on a
(finite-dimensional) Hilbert space $E$. Then $O(\xi_1,\xi_2)$ is
associated with the unitarily induced representation
\begin{equation*}\hat {\pi}_0=\rm {Ind}_{V\times
M}^{G_0}\,\left(e^{i\nu}\otimes \sigma \right)\end{equation*} where
$\nu=\beta(\xi_1,\cdot)\in {\mathfrak a}^{\ast}$ (see \cite{Ko2} and
\cite{Rawn}). By a result of Mackey, $\hat {\pi}_0$ is irreducible
since $\sigma$ is irreducible \cite{Tay}.
\par Let $O_V(\xi_1)$ be the orbit of $\xi_1$ in $V$ under the
action of $K$. We denote by $\mu$ the $K$-invariant measure on
$O_V(\xi_1)\simeq K/M$. We denote by $\tilde {\pi}_0$ the usual
realization of $\hat {\pi}_0$ on the space of square-integrable
sections of a Hermitian vector bundle over $O_V(\xi_1)$ \cite{Ko2},
\cite{Sim}, \cite{Rawn}. Let us  briefly describe the construction
of $\tilde {\pi}_0$. We introduce the Hilbert $G_0$-bundle
$L:=G_0\times_{e^{i\nu}\otimes \sigma}E$ over $O_V(\xi_1)\simeq
K/M$. Recall that an element of $L$ is an equivalence class
\begin{equation*}[g,u]=\{(g.(v,m),e^{-i\nu(v) }
\sigma (m)^{-1}u)\,:\,v\in V,\,m\in M\} \end{equation*} where $g\in
G_0$, $u\in E$ and that $G_0$ acts on $L$ by left translations:
$g\,[g',u]:=[g.g',u]$. The action of $G_0$ on $O_V(\xi_1)\simeq K/M$
being given by $(v,k).\xi=\Ad(k)\xi$, the projection map
$[(v,k),u]\rightarrow \Ad(k)\xi_1$ is $G_0$-equivariant. The
$G_0$-invariant Hermitian structure on $L$ is given by
\[\langle [g,u],[g,u']\rangle =\langle u,u'\rangle _E \]
where $g\in G_0$ and $u,\,u'\in E$. Let ${\mathcal H}_0$ be the
space of sections $s$ of $L$ which are square-integrable with
respect to the measure $\mu$, that is,
\begin{equation}\Vert s\Vert_{{\mathcal
H}_0}^2=\int_{O_V(\xi_1)}\,\langle s(\xi)\,,\,s(\xi)\rangle \,d\mu
(\xi)<+\infty. \nonumber
\end{equation}
Then $\tilde{\pi}_0$ is the action of $G_0$ on ${\mathcal H}_0$
defined by
\begin{equation}(\tilde{\pi}_0(g)\,s)(\xi)=g\,s(g^{-1}.\xi).\nonumber
\end{equation}
\par For the study of contractions, it is more convenient to realize
$\hat{\pi}_0$ in the Hilbert space $L^2({\bar N},E)$ introduced in
Section 2. To this aim, we consider the map $\tau: y\rightarrow
\Ad({\tilde k}(y))\xi_1$ which is a diffeomorphism from $\bar N$
onto a dense open subset of $O_V(\xi_1)$ \cite{Wa}, Lemma 7.6.8. We
denote by $k\cdot y$ the action of $k\in K$ on $y\in {\bar N}$
defined by $\tau (k\cdot y)=\Ad(k)\tau (y)$ or, equivalently, by
$k\cdot y={\bar n}(ky)$. Then the $K$-invariant measure on $\bar N$
is given by $(\tau^{-1})^{\ast}(\mu)=e^{-2\rho(\log {\tilde
a}(y))}dy$ \cite{Wa}, Lemma 7.6.8. We associate with each $s\in
{\mathcal H}_0$ the function $\phi_s: {\bar N}\rightarrow E$ defined
by
\begin{equation*}s(\tau (y))=[(0,{\tilde k}(y))\,,\,e^{\rho(\log {\tilde
a}(y))}\phi_s(y)].\end{equation*}For $s$ and $s'$ in ${\mathcal
H}_0$, we have
\begin{equation*}\langle s(\tau (y)),s'(\tau(
y))\rangle =e^{2\rho(\log {\tilde a}(y))}\,\langle
\phi_s(y)\,,\,\phi_{s'}(y)\rangle _E.
\end{equation*} This implies that
\begin{equation*}\langle s\,,\,s'\rangle _{{\mathcal
H}_0}=\int_{{\bar N}}\,\langle \phi_s(y)\,,\,\phi_{s'}(y)\rangle
_E\,\,dy.
\end{equation*}
Moreover, for $s \in {\mathcal H}_0$, $g=(v,k)\in G_0$ and $y\in
{\bar N}$, we have
\begin{align*}(&{\tilde \pi}_0(g)s)(\tau
(y))=g\,s(g^{-1}.y)=g\,s(\tau (k^{-1}\cdot y))\\&=(v,k)\,[(0,{\tilde
k}(k^{-1}\cdot y))\,,\,e^{\rho(\log {\tilde a}(k^{-1}\cdot y)}\phi_s
(k^{-1}\cdot y)]\\&=[(v,k{\tilde k}(k^{-1}\cdot y))\,,\,e^{\rho(\log
{\tilde a}(k^{-1}\cdot y)}\phi_s (k^{-1}\cdot y)]\\&=e^{\rho(\log
{\tilde a}(k^{-1}\cdot y)}\,[(0,{\tilde k}(y)).(\Ad ({\tilde
k}(y))^{-1}v,m(k,y))\,,\,\phi_s (k^{-1}\cdot y)]\\&=e^{\rho(\log
{\tilde a}(k^{-1}\cdot y))+i\nu (\Ad ({\tilde
k}(y))^{-1}v)}\,[(0,{\tilde k}(y))\,,\,\sigma (m(k,y)) \phi_s
(k^{-1}\cdot y)]
\end{align*}
where we have set $m(k,y):={\tilde k}(y)^{-1}k{\tilde k}(k^{-1}\cdot
y)\in M$.  Hence we see that the equality
\begin{equation} (\pi_0(v,k)\phi) (y)=e^{i\beta (\Ad ({\tilde
k}(y))\xi_1,v)+\rho (\log {\tilde a}(k^{-1}\cdot y)-\log {\tilde
a}(y))}\,\sigma (m(k,y))\phi (k^{-1}\cdot y) \end{equation} defines
a unitary representation $\pi_0$ of $G_0$ on $L^2({\bar N},E)$ which
is unitarily equivalent to ${\tilde \pi}_0$, the intertwining
operator between $\pi_0$ and ${\tilde \pi}_0$ being $s\rightarrow
\phi_s$.
\par We can simplify Formula (3.1) as follows. Let $k\in K$ and
$y\in {\bar N}$. Write $k^{-1}y={\bar
n}(k^{-1}y)m(k^{-1}y)a(k^{-1}y)n(k^{-1}y)$. Then $k^{-1}{\tilde
k}(y)={\tilde k}({\bar n}(k^{-1}y))m(k^{-1}y)$. Thus $m(k,y)={\tilde
k}(y)^{-1}k{\tilde k}(k^{-1}\cdot y)=m(k^{-1}y)^{-1}$. We also see
that
\begin{equation*}{\tilde a}(y)={\tilde a}(k^{-1}y)={\tilde a}
({\bar n}(k^{-1}y))a(k^{-1}y)={\tilde a}(k^{-1}\cdot y)
a(k^{-1}y).\end{equation*} Hence we obtain
\begin{equation} (\pi_0(v,k)\phi) (y)=e^{-\rho (\log
a(k^{-1}y)+i\beta (\Ad ({\tilde k}(y))\xi_1,v)} \,\sigma
(m(k^{-1}y))^{-1}\phi ({\bar n}(k^{-1}y)).
\end{equation}
The computation of $d\pi_0$ is quite similar to that of $d\pi$ (see
Section 2). By using Lemma 2.1 we easily obtain the following
result.

\begin{proposition} For $(v,U)\in {\mathfrak g}_0$, $\phi \in
C_0({\bar N},E)$ and $y\in {\bar N}$, we have
\begin{align*}(d\pi_0&(v,U)\phi)(y)=i\beta \bigl(\Ad ({\tilde k}(y))
\xi_1,v \bigr)\phi(y)\\
&+ \rho \bigl( p_{\mathfrak{a}}(\Ad (y^{-1})U)\bigr)\phi (y)+d\sigma
\bigl( p_{\mathfrak{m}}(\Ad (y^{-1})U)\bigr)\phi (y)\\&-d\phi (y)
\bigl(\Ad
(y)\,p_{\bar {\mathfrak{n}}}(\Ad (y^{-1})\,U)\bigr)^+(y).\\
\end{align*} \end{proposition}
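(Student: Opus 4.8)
The plan is to obtain the formula for $d\pi_0$ exactly as Proposition 2.2 was deduced from formula (2.3) and Lemma 2.1, namely by differentiating the explicit action formula (3.2) at $t=0$ along a one-parameter subgroup. I would fix $(v,U)\in{\mathfrak g}_0=V\times{\mathfrak k}$, $\phi\in C_0(\bar N,E)$ and $y\in\bar N$, and set $g(t)=\exp(t(v,U))\in G_0$. Writing $g(t)=(v(t),k(t))$ with $v(t)\in V$ and $k(t)\in K$, the only facts about the $G_0$-exponential map that enter the first-order computation are $v(0)=0$, $\dot v(0)=v$ and $k(t)=\exp(tU)$; then $(d\pi_0(v,U)\phi)(y)=\frac{d}{dt}\bigl(\pi_0(g(t))\phi\bigr)(y)\big|_{t=0}$ and one differentiates the right-hand side of (3.2) by the product rule.

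At $t=0$ we have $k(0)^{-1}y=y\in\bar N$, hence $a(k(0)^{-1}y)=e$, $m(k(0)^{-1}y)=e$, $\bar n(k(0)^{-1}y)=y$ and $v(0)=0$; so in (3.2) the scalar factor equals $1$, the factor $\sigma(m(k^{-1}y))^{-1}$ equals $\id_E$, and $\phi(\bar n(k^{-1}y))=\phi(y)$. The $t$-dependence of $a(k^{-1}y)$, $m(k^{-1}y)$ and $\bar n(k^{-1}y)$ is entirely through $k(t)^{-1}y=\exp(-tU)\,y$, so the first assertion of Lemma 2.1 applies verbatim with the Lie algebra element $X=-U$ (the $\tilde a$-formula of Lemma 2.1 is not needed here, since $a(y)=e$). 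Differentiating the phase $e^{i\beta(\Ad(\tilde k(y))\xi_1,v(t))}$ — the only place in (3.2) where the $V$-component of $g(t)$ appears — and using $\dot v(0)=v$ produces $i\beta(\Ad(\tilde k(y))\xi_1,v)\phi(y)$, which is precisely the term absent from the semisimple formula of Proposition 2.2.

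Assembling the pieces as in the proof of Proposition 2.2: the factor $-\rho(\log a(k^{-1}y))$ contributes $\rho\bigl(p_{\mathfrak a}(\Ad(y^{-1})U)\bigr)\phi(y)$, the factor $\sigma(m(k^{-1}y))^{-1}$ contributes $d\sigma\bigl(p_{\mathfrak m}(\Ad(y^{-1})U)\bigr)\phi(y)$, and the factor $\phi(\bar n(k^{-1}y))$ contributes $-d\phi(y)\bigl(\Ad(y)\,p_{\bar{\mathfrak n}}(\Ad(y^{-1})U)\bigr)^+(y)$; in each of these three cases the sign produced by replacing $U$ with $-U$ in Lemma 2.1 is absorbed by the sign already carried by the factor (the $-\rho$, the inversion $\sigma(\cdot)^{-1}$, and the outer minus sign in the $\bar n$-derivative), so no net sign survives. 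Adding the four contributions gives the stated formula. I do not expect any genuine obstacle: the argument is essentially a transcription of the proof of Proposition 2.2, the Cartan motion group structure being felt only through the extra phase term $i\beta(\Ad(\tilde k(y))\xi_1,v)\phi(y)$. The only points that deserve a line of care are the sign bookkeeping just described and the standard observation, as in Section 2, that the elements of $C_0(\bar N,E)$ are smooth vectors for $\pi_0$ and that the $t$-derivative may be taken pointwise in $y$ because $(g,y)\mapsto(\pi_0(g)\phi)(y)$ is smooth.
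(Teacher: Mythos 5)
Your argument is correct and is exactly the paper's approach: the author's proof consists solely of the remark that the computation of $d\pi_0$ is ``quite similar to that of $d\pi$'' using Lemma~2.1, and you carry out precisely that differentiation of Formula~(3.2) along $t\mapsto\exp(t(v,U))$, tracking the extra phase from the $V$-component. One small imprecision in the wording only: in the $\bar n$-term the sign coming from replacing $U$ by $-U$ is not cancelled but survives as the minus sign in $-d\phi(y)\bigl(\Ad(y)\,p_{\bar{\mathfrak n}}(\Ad(y^{-1})U)\bigr)^+(y)$, exactly as in the last term of Proposition~2.2, so the bookkeeping still closes correctly.
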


\section {Berezin-Weyl calculus}
In this section, we keep the notation of the previous sections. We
recall some properties of the Berezin calculus on $o(\xi_2)$ and of
the Berezin-Weyl calculus on ${\bar N}\times {\bar {\frak n}}\times
o(\xi_2)$ which was introduced in \cite{CaLMP} as a generalization
of the usual Weyl calculus.
\par The Berezin calculus on $o(\xi_2)$ associates
with each operator $B$ on the finite-dimensional complex vector
space $E$ a complex-valued function $s(B)$ on the orbit $o(\xi_2)$,
which is called the symbol of $B$ (see \cite{Be}). The following
properties of the Berezin calculus can be found in \cite{CGR},
\cite{CaLMP}, \cite{CaComm}. \begin{proposition}
\begin{enumerate}
\item The map $B\rightarrow s(B)$ is injective.

\item  For each operator $B$ on $E$, we have $s(B^{\ast})=\overline
{s(B)}$.

\item  For each
operator $B$ on $E$, each $m\in M$ and each $\varphi \in o(\xi_2)$,
we have
\begin{equation*} s(B)(\Ad (m)\varphi)=s(\sigma (m)^{-1}B \sigma
(m))(\varphi).\end{equation*}

\item  For $X\in {\mathfrak m}$ and $\varphi \in o(\xi_2)$,
we have $s(d\sigma(X))(\varphi)=i\beta ( \varphi, X) $.
\end{enumerate}

\end{proposition}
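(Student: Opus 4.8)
The plan is to base everything on the coherent--state description of the Berezin calculus on $o(\xi_2)$. Since $\sigma$ is the irreducible unitary representation of the compact group $M$ whose highest weight is $i\beta(\xi_2,\cdot)$, the orbit $o(\xi_2)\simeq M/M_{\xi_2}$ carries an $M$-invariant complex (K\"ahler) structure and, by Borel--Weil, $E$ is realized as the space of holomorphic sections of the associated homogeneous line bundle, with a reproducing kernel; fix a unit highest weight vector $v_0\in E$ compatibly with the conventions of Section 2. For $\varphi=\Ad(m)\xi_2$ put $e_\varphi:=\sigma(m)v_0$; the hypothesis that $\sigma$ is associated with $o(\xi_2)$ is exactly what guarantees that $e_\varphi$ depends on $\varphi$ alone up to a phase, so that
\[
s(B)(\varphi)=\frac{\langle B\,e_\varphi,e_\varphi\rangle}{\langle e_\varphi,e_\varphi\rangle}=\langle B\,e_\varphi,e_\varphi\rangle
\]
is well defined, the last equality because $\sigma(m)$ is unitary and $\|v_0\|=1$.

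With this set-up, (2) and (3) are immediate. For (2), $s(B^{\ast})(\varphi)=\langle B^{\ast}e_\varphi,e_\varphi\rangle=\overline{\langle B\,e_\varphi,e_\varphi\rangle}=\overline{s(B)(\varphi)}$. For (3), since $e_{\Ad(m)\varphi}=\sigma(m)e_\varphi$ up to a phase and $\sigma(m)$ is unitary,
\[
s(B)(\Ad(m)\varphi)=\langle B\,\sigma(m)e_\varphi,\sigma(m)e_\varphi\rangle=\langle \sigma(m)^{-1}B\,\sigma(m)e_\varphi,e_\varphi\rangle=s\bigl(\sigma(m)^{-1}B\sigma(m)\bigr)(\varphi).
\]
For the injectivity statement (1), I would observe that $s(B)$ is the restriction to the diagonal of the kernel $(\varphi,\varphi')\mapsto\langle B\,e_\varphi,e_{\varphi'}\rangle$, which under the Borel--Weil identification is holomorphic in $\varphi$ and antiholomorphic in $\varphi'$; a sesquiholomorphic kernel vanishing on the diagonal vanishes identically (compare the Taylor coefficients of its holomorphic extension), so $s(B)=0$ forces $\langle B\,e_\varphi,e_{\varphi'}\rangle\equiv0$, and since the coherent states span $E$ by irreducibility of $\sigma$, this gives $B=0$. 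Alternatively one may invoke Berezin's resolution of the identity on $E$ by the rank-one projections onto the $e_\varphi$. I expect (1) to be the only point needing a genuine argument, and the main (minor) obstacle: one cannot merely polarize $\langle B\,w,w\rangle$, because the coherent states do not form a linear subspace, so analyticity (or the resolution of identity) is really needed.

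Finally, for (4) I would first evaluate at the base point. Decompose $X\in\mathfrak m$ as $X=X_0+X'$ with $X_0$ the $\mathfrak t$-component of $X$ and $X'$ in the sum of the root spaces of $\mathfrak m_{\mathbb C}$. Then $d\sigma(X_0)v_0=i\beta(\xi_2,X_0)v_0$ because the highest weight of $\sigma$ is $i\beta(\xi_2,\cdot)$, while $\langle d\sigma(X')v_0,v_0\rangle=0$ since the raising part of $X'$ annihilates $v_0$ and the lowering part sends $v_0$ into strictly lower weight spaces, all orthogonal to $v_0$; moreover $\beta(\xi_2,X')=0$ because $\xi_2\in\mathfrak t$ and $\beta$ pairs $\mathfrak t$ only with $\mathfrak t$. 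Hence $s(d\sigma(X))(\xi_2)=\langle d\sigma(X)v_0,v_0\rangle=i\beta(\xi_2,X)$. For a general $\varphi=\Ad(m)\xi_2$, apply (3):
\[
s(d\sigma(X))(\Ad(m)\xi_2)=s\bigl(d\sigma(\Ad(m^{-1})X)\bigr)(\xi_2)=i\beta(\xi_2,\Ad(m^{-1})X)=i\beta(\Ad(m)\xi_2,X),
\]
using $d\sigma(\Ad(m^{-1})X)=\sigma(m)^{-1}d\sigma(X)\sigma(m)$ and the $\Ad$-invariance of $\beta$. This settles all four parts.
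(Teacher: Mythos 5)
The paper states this proposition without proof, referring the reader to \cite{CGR}, \cite{CaLMP}, \cite{CaComm}; so there is no argument of the paper's own to compare against. Your coherent-state derivation is the standard one and is substantially correct, so in effect you have supplied the proof the paper delegates to the references.

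Parts (2), (3), (4) check out. For (4) in particular, the decomposition of $X\in\mathfrak m$ into its $\mathfrak t$-part and the $\beta$-orthogonal complement, the evaluation at the base point (the off-torus part both satisfies $\beta(\xi_2,X')=0$ and sends $v_0$ into weight spaces orthogonal to $v_0$), and the propagation via (3) together with $d\sigma(\Ad(m^{-1})X)=\sigma(m)^{-1}d\sigma(X)\sigma(m)$ and $\Ad$-invariance of $\beta$, is exactly the expected argument. Your main argument for (1) is also correct, and you are right that it is the one part that needs a real idea: one passes to a local unnormalized holomorphic family $\tilde e_\varphi$, notes that $(\varphi,\varphi')\mapsto\langle B\tilde e_\varphi,\tilde e_{\varphi'}\rangle$ is holomorphic in $\varphi$ and antiholomorphic in $\varphi'$, deduces that vanishing on the diagonal forces vanishing identically, and then uses that the coherent states span $E$ (their span is a nonzero $M$-invariant subspace, hence all of $E$ by irreducibility) to conclude $B=0$.

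Two cautions. First, the parenthetical alternative for (1) is not actually a substitute: a resolution of the identity $\int_{o(\xi_2)}\langle\,\cdot\,,e_\varphi\rangle\,e_\varphi\,d\mu(\varphi)=c\,\mathrm{Id}_E$ proves completeness of the family $\{e_\varphi\}$ and hence that the \emph{two-variable} kernel $\langle Be_\varphi,e_{\varphi'}\rangle$ determines $B$, but it says nothing about why the \emph{diagonal} restriction $s(B)$ determines that kernel. That recovery step genuinely requires the K\"ahlerian/sesquiholomorphic structure on $o(\xi_2)$, as your primary argument uses. Second, $M$ is the centralizer of $\mathfrak a$ in $K$ and need not be connected; the well-definedness of $e_\varphi$ up to phase (that is, $M_{\xi_2}$ acting on $v_0$ by a unitary character) and the Borel--Weil picture rest on the precise meaning of ``$\sigma$ is associated with $o(\xi_2)$'' in the sense of \cite{Wild}. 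You correctly invoke that hypothesis, but it is worth knowing that this is exactly where the paper's terse phrasing is doing nontrivial work.
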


In particular, note that the map $s^{-1}$ is an adapted Weyl
transform on $o(\xi_2)$ in the sense of \cite{CaDGA} (see also
Section 5).

\par Now we introduce the Berezin-Weyl calculus on ${\bar N}\times {\bar {\mathfrak
n}} \times o(\xi_2)$ as a slight modification of the usual Weyl
calculus for ${\rm End}(E)$-valued functions \cite{Ho}. We say that
a smooth function $f:\,(y,Z,\varphi)\rightarrow f(y,Z,\varphi)$ is a
symbol on ${\bar N}\times {\bar {\mathfrak n}} \times o(\xi_2)$ if
for each $(y,Z)\in {\bar N}\times {\bar {\mathfrak n}}$ the function
$\varphi \rightarrow f(y,Z,\varphi)$ is the symbol, in the Berezin
calculus on $o(\xi _2)$, of an operator on $E$ denoted by ${\hat
f}(y,Z)$. A symbol $f$ on ${\bar N}\times {\bar {\mathfrak n}}
\times o(\xi_2)$ is called an S-symbol if the function $\hat {f}$
belongs to the Schwartz space of rapidly decreasing smooth functions
on ${\bar N}\times {\bar {\mathfrak n}}$ with values in ${\rm End}
(E)$. For each S-symbol $f$ on ${\bar N}\times {\bar {\mathfrak n}}
\times o(\xi_2)$ we define an operator $ W(f)$ on $L^2({\bar N},E)$
by
\begin{equation}( W(f)\phi )(y)={(2\pi)}^{-n}\,\int_{{\bar {\mathfrak n}}
\times {\bar {\mathfrak n}}}\, e^{i\langle T,Z\rangle }{\hat
{f}}\bigl( y\exp(T/2), Z \bigr)\,\phi (y\exp T)\,dT\,dZ
\end{equation} for $\phi \in C_0({\bar N},E)$.

\par As the usual Weyl calculus, the Weyl-Berezin calculus can be
extended to much larger classes of symbols. Here we only consider a
class of polynomial symbols. For $Z\in {\bar {\mathfrak n}}$, we
denote by $(z_1,z_2,\ldots,z_n)$ the coordinates of $Z$ in the basis
$(E_i)_{1\leq i\leq n}$ of ${\bar {\mathfrak n}}$. We say that a
symbol $f$ on ${\bar N}\times {\bar {\mathfrak n}} \times o(\xi_2)$
is a P-symbol if the function ${\hat {f}}(y,Z)$ is polynomial in
$z_1,z_2,\ldots,z_n$. Let $f$ be the P-symbol defined by $
f(y,Z,\varphi)=u(y)z_1^{\alpha_1}z_2^{\alpha_2}\ldots
z_n^{\alpha_n}$ where $u\in C^{\infty}({\bar N})$. By imitating
\cite{Vo}, p. 105, we get
\begin{equation}({W}(f)\phi )(y)=(i{\partial_{z_1}})^{\alpha_1}
(i{\partial_{z_2}})^{\alpha_2}\ldots (i{\partial_{z_n}})^{\alpha_n}
\bigl( u(y\exp Z/2)\,\phi (y\exp Z) \bigr) \Bigl \vert
_{Z=0}.\end{equation} In particular, if $f(y,Z,\varphi)=u(y)$ where
$u\in C^{\infty}({\bar N})$, then
\begin{equation}( W(f)\phi )(y)=u(y)\,\phi
(y)\end{equation} and if $ f(y,Z,\varphi)=(v(y), Z)$ where $v\in
C^{\infty}({\bar N},{\bar {\mathfrak n}})$, then
\begin{align}(W(f)&\phi )(y)=i\Bigl( \sum_{k=1}^n {\frac {d}{dt}}
\bigl(E_k\,,\, v(y\exp(tE_k/2)\bigr)\bigr \vert_{t=0}\phi (T)
\\&+{\frac {d}{dt}}\,\phi (y\exp (tv(y))\bigr \vert_{t=0} \Bigr).\nonumber
\end{align}
The following lemmas will be needed in Section 5 and Section 6.

\begin{lem} Let $X\in {\mathfrak g}$ and let $f$ be the P-symbol on
${\bar N}\times {\bar {\mathfrak n}} \times o(\xi_2)$ defined by
$f(y,Z,\varphi)=(p_{\bar {\mathfrak n}}(\Ad (y^{-1})X),Z)$. Then we
have \begin{align*} W(f)\phi &(y)=-i\rho (p_{\mathfrak a}(\Ad
(y^{-1})X))\,\phi (y)\\&+i(d\phi) (y)\bigl(\Ad (y) p_{\bar
{\mathfrak n}}(\Ad (y^{-1})X)\bigr)^+(y)\end{align*} for each $\phi
\in C_0({\bar N}, E)$.
\end{lem}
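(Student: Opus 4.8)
The plan is to substitute the $\bar{\mathfrak n}$-valued function $v(y):=p_{\bar{\mathfrak n}}(\Ad(y^{-1})X)$ into the formula for $W(f)$ established above for symbols of the form $f(y,Z,\varphi)=(v(y),Z)$, and to simplify separately the two contributions it produces. The term arising from the derivative of $\phi$ is immediate: since $y\exp(tW)=\exp\bigl(t\,\Ad(y)W\bigr)\,y$, one has $\tfrac{d}{dt}\,y\exp(tW)\big|_{t=0}=\bigl(\Ad(y)W\bigr)^{+}(y)$ for every $W\in\mathfrak g$, so taking $W=v(y)$ gives
\[\tfrac{d}{dt}\,\phi\bigl(y\exp(tv(y))\bigr)\Big|_{t=0}=(d\phi)(y)\bigl(\Ad(y)\,p_{\bar{\mathfrak n}}(\Ad(y^{-1})X)\bigr)^{+}(y),\]
which, multiplied by $i$, is exactly the second term on the right-hand side of the asserted identity.

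It then remains to identify the scalar term $i\,\phi(y)\sum_{k=1}^{n}\tfrac{d}{dt}\bigl(E_k,v(y\exp(tE_k/2))\bigr)\big|_{t=0}$ with $-i\rho\bigl(p_{\mathfrak a}(\Ad(y^{-1})X)\bigr)\phi(y)$. The key preliminary remark is that $p_{\bar{\mathfrak n}}$ agrees with the orthogonal projection of $\mathfrak g$ onto $\bar{\mathfrak n}$ for the scalar product $(\,\cdot\,,\cdot\,)=-\beta(\,\cdot\,,\theta\,\cdot\,)$: $\beta$ pairs $\mathfrak g_{\lambda}$ with $\mathfrak g_{\mu}$ only when $\lambda+\mu=0$, and $\theta$ fixes $\mathfrak m\oplus\mathfrak a$ while interchanging $\mathfrak n$ with $\bar{\mathfrak n}$, so $\bar{\mathfrak n}$ is $(\,\cdot\,,\cdot\,)$-orthogonal to $\mathfrak m\oplus\mathfrak a\oplus\mathfrak n$; consequently the projection inside each pairing $(E_k,\cdot)$ may be dropped. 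Differentiating, $\tfrac{d}{dt}v(y\exp(tE_k/2))\big|_{t=0}=-\tfrac12\,p_{\bar{\mathfrak n}}\bigl([E_k,\Ad(y^{-1})X]\bigr)$, so the sum equals $-\tfrac12\sum_{k}\bigl(E_k,[E_k,\Ad(y^{-1})X]\bigr)$. I would then show that the linear functional $W\mapsto\sum_{k}(E_k,[E_k,W])=-\Tr_{\bar{\mathfrak n}}\bigl(p_{\bar{\mathfrak n}}\circ\ad W\bigr)$ on $\mathfrak g$ depends only on $p_{\mathfrak a}W$: it annihilates every restricted root space $\mathfrak g_{\mu}$ with $\mu\neq0$ for weight reasons, hence $\mathfrak n\oplus\bar{\mathfrak n}$; it vanishes on $\mathfrak m$ because $\ad W$ preserves $\bar{\mathfrak n}$ and is skew-symmetric for $(\,\cdot\,,\cdot\,)$ whenever $W\in\mathfrak k$ (there $\theta$ commutes with $\ad W$), so its trace on $\bar{\mathfrak n}$ vanishes; and for $H\in\mathfrak a$ one has $[E_k,H]=\lambda(H)E_k$ when $E_k\in\mathfrak g_{-\lambda}$, whence $\sum_{k}(E_k,[E_k,H])=-\Tr_{\bar{\mathfrak n}}(\ad H)=2\rho(H)$. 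Assembling, the scalar term becomes $i\,\phi(y)\cdot(-\tfrac12)\cdot2\rho\bigl(p_{\mathfrak a}(\Ad(y^{-1})X)\bigr)=-i\rho\bigl(p_{\mathfrak a}(\Ad(y^{-1})X)\bigr)\phi(y)$, and adding the two contributions yields the lemma.

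The whole argument is the usual bookkeeping with the decomposition $\mathfrak g=\bar{\mathfrak n}\oplus\mathfrak m\oplus\mathfrak a\oplus\mathfrak n$ and with one-parameter subgroups, in the same spirit as Lemma 2.1, so I expect no conceptual difficulty. The one structural point needing care is the identification of $p_{\bar{\mathfrak n}}$ with the $(\,\cdot\,,\cdot\,)$-orthogonal projection onto $\bar{\mathfrak n}$; and the main practical obstacle is keeping the signs and normalizations straight — the factor $-\tfrac12$ from differentiating $\Ad(\exp(-tE_k/2))$, the sign of $\ad H$ on $\mathfrak g_{-\lambda}$, and the factors of $i$ in the formula for $W(f)$ — so that they combine to give exactly $\rho=\tfrac12\sum_{\lambda\in\Delta^{+}}\lambda$.
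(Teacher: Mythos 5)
Your proof is correct and follows essentially the same route as the paper: substitute $v(y)=p_{\bar{\mathfrak n}}(\Ad(y^{-1})X)$ into Formula (4.4), identify the $d\phi$-term directly, and reduce the scalar term to the linear functional $W\mapsto\Tr_{\bar{\mathfrak n}}(p_{\bar{\mathfrak n}}\circ\ad W)$ which is then computed to be $-2\rho(p_{\mathfrak a}W)$ by a case-by-case inspection of the decomposition $\bar{\mathfrak n}\oplus\mathfrak m\oplus\mathfrak a\oplus\mathfrak n$. The only differences are cosmetic: you make explicit (usefully) that $p_{\bar{\mathfrak n}}$ is the $(\cdot,\cdot)$-orthogonal projection, and you treat the $\mathfrak n$ and $\bar{\mathfrak n}$ cases uniformly by a single weight-shift argument rather than the paper's two separate nilpotency observations.
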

\begin{proof} We apply Formula (4.4) to
$f(y,Z,\varphi)=(p_{\bar {\mathfrak n}}(\Ad (y^{-1})X),Z)$. On the
one hand, we have
\begin{align*} \sum_{k=1}^n {\frac {d}{dt}} \bigl(E_k\,,\,
v(y\exp(tE_k/2)\bigr)\bigr \vert_{t=0}&=-{\frac
{1}{2}}\,\sum_{k=1}^n \bigl(E_k,p_{\bar {\mathfrak n}}\bigl(\ad E_k
(\Ad (y^{-1})X)\bigr)\bigr)\\&={\frac {1}{2}}\,\Tr_{\bar {\mathfrak
n}}(p_{\bar {\mathfrak n}}\circ \ad (\Ad (y^{-1})X)).\end{align*}
But for each $Y\in {\mathfrak g}$, we have $\Tr_{\bar {\mathfrak
n}}(p_{\bar {\mathfrak n}}\circ \ad (Y))=-2\rho(p_{\mathfrak
a}(Y))$. This equality can be proved as follows. If $Y\in {\bar
{\mathfrak n}}$ then $p_{\bar {\mathfrak n}}\circ \ad Y =\ad Y$ is a
nilpotent endomorphism of ${\bar {\mathfrak n}}$. Thus $\Tr _{\bar
{\mathfrak n}}\left( p_{\bar {\mathfrak n}}\circ \ad Y \right)=0$.
If $Y\in {\mathfrak n}$ then, since $[{\mathfrak n}\,,\,{\mathfrak
g}_{\lambda}]\subset {\mathfrak a}+\sum_{\mu>\lambda}{\mathfrak
g}_{\mu}$ for each $\lambda<0$, we also have that $\Tr _{\bar
{\mathfrak n}}\left( p_{\bar {\mathfrak n}}\circ \ad Y \right)=0$.
If $X\in {\mathfrak m}$ then $p_{\bar {\mathfrak n}}\circ \ad Y=\ad
Y$ is an endomorphism of $\bar {\mathfrak n}$ which is
skew-symmetric with respect to $(\cdot,\cdot)$. Thus $\Tr _{\bar
{\mathfrak n}}\left( p_{\bar {\mathfrak n}}\circ \ad Y \right)=0$.
Finally, if $Y\in {\mathfrak a}$ then $\Tr _{\bar {\mathfrak
n}}\left( p_{\bar {\mathfrak n}}\circ \ad Y \right)=\Tr _{\bar
{\mathfrak n}}(\ad Y)=-2\rho (Y)$.
\par Then we get
\begin{equation}\sum_{k=1}^n {\frac {d}{dt}}
\bigl(E_k\,,\, v(y\exp(tE_k/2))\bigr)\bigr \vert_{t=0}=-\rho \bigl(
p_{\mathfrak a}(\Ad(y^{-1})X) \bigr).\end{equation} On the other
hand, we have
\begin{equation}{\frac {d}{dt}}\,\phi (y\exp (tv(y)))\bigr \vert_{t=0}
=(d\phi)(y)\bigl( \Ad(y)v(y)\bigr)^+(y).\end{equation} Putting (4.5)
and (4.6) together, we get the desired result.
\end{proof}

\par We can identify the cotangent bundle $T^{\ast}{\bar N}$ with ${\bar
N}\times {\bar {\mathfrak n}}$ by using the map $j:\,{\bar N}\times
{\bar {\mathfrak n}}\rightarrow T^{\ast}{\bar N}$ defined by
\begin{equation*} \langle j(y,Z),Y^+(y)\rangle=-\beta (\theta (Z),\Ad
(y^{-1})Y) \end{equation*} for $y\in {\bar N}$ and $Y,\,Z\in {\bar
{\mathfrak n}}$. Under this identification, the Liouville 1-form on
$T^{\ast}{\bar N}$ corresponds to the 1-form $\alpha$ on ${\bar
N}\times {\bar {\mathfrak n}}$ given by \begin{equation*}
\alpha_{(y,Z)}(Y^+(y),T)=-\beta (\theta (Z),\Ad (y^{-1})Y)
\end{equation*} for $y\in {\bar N}$ and $Z,\,Y,\,T \in
{\bar {\mathfrak n}}$. We denote by $\{\cdot,\cdot \}_1$ the Poisson
bracket associated with the symplectic 2-form $d\alpha$ on ${\bar
N}\times {\bar {\mathfrak n}}$. We also denote by $\omega_2$ the
Kirillov 2-form on $o(\xi_2)$ and by $\{\cdot,\cdot \}_2$ the
corresponding Poisson bracket. We form the symplectic product ${\bar
N}\times {\bar {\mathfrak n}}\times o(\xi_2)$ and denote by
$\{\cdot,\cdot \}_p$ the Poisson bracket associated with the
symplectic form $\omega_p:=d\alpha \otimes \omega_2$. Let $u,\,v\in
C^{\infty}({\bar N}\times {\bar {\mathfrak n} })$ and $a,\,b\in
C^{\infty}(o(\xi_2))$. Then, for $f(y,Z,\varphi)= u(y,Z)a(\varphi)$
and $g(y,Z,\varphi)=v(y,Z)b(\varphi)$ we have
\begin{equation*}\{f\,,\,g\}_p=u(y,Z)v(y,Z) \{a,\,b\}_2+a(\varphi ) b(\varphi )
\{u\,,\,v\}_1.\end{equation*}

\begin{lem}Let
$f$ and $g$ be two {\it P}-symbols on ${\bar N}\times
{\bar{\mathfrak n}}\times o(\xi_2)$  of the form
\begin{equation*}u(y)+\beta (v(y), \varphi ) +\sum_{k=1}^n\,w_k(y)z_k
\end{equation*}
 where
$u\in C^{\infty}({\bar N})$, $v\in C^{\infty}({\bar N}, {\bar
{\mathfrak n }})$ and $w_k\in C^{\infty}({\bar N})$ for $k=1, 2,
\ldots ,n$. Then we have \begin{equation*} [W(f)\,,\, W(g)]=-i\,
W(\{f\,,\,g\}_p).\end{equation*}\end{lem}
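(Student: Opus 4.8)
The plan is to verify the Leibniz-type identity $[W(f),W(g)]=-iW(\{f,g\}_p)$ by bilinearity, reducing to the case where $f$ and $g$ are each one of the three elementary types appearing in the stated form: a function $u(y)$ of $y$ alone, a term $\beta(v(y),\varphi)$ linear on $o(\xi_2)$, or a term $(w(y),Z)=\sum_k w_k(y)z_k$ linear in the fibre coordinates. This gives six essentially distinct pairings (up to the antisymmetry of both sides), and for each I will compute the left-hand side using the explicit operator formulas (4.2)--(4.4) together with Lemma 4.1, and the right-hand side using the product formula for $\{\cdot,\cdot\}_p$ recalled just before the statement, namely that on a product of a ``base'' symbol and a ``Berezin'' symbol the Poisson bracket splits as $\{u a, v b\}_p = uv\{a,b\}_2 + ab\{u,v\}_1$.

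First I would handle the pairs that stay within the cotangent-bundle factor ${\bar N}\times{\bar{\mathfrak n}}$: $\{u(y),u'(y)\}_1=0$ and the operators $W(u)$, $W(u')$ are multiplications, so both sides vanish; $\{u(y),(w(y),Z)\}_1$ and $\{(w(y),Z),(w'(y),Z)\}_1$ reproduce (after identifying $T^\ast{\bar N}\cong{\bar N}\times{\bar{\mathfrak n}}$ via $j$) exactly the standard Weyl-calculus commutator relations between functions on the base and symbols linear in the momenta; here I would lean on the identification of $d\alpha$ with the canonical symplectic form and on formula (4.4), exactly as in the classical case treated in \cite{Ho}, \cite{Vo}. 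The key new computation is the pair involving the momentum-linear symbol $f=(p_{\bar{\mathfrak n}}(\Ad(y^{-1})X),Z)$ type: Lemma 4.1 already tells us precisely what $W$ of such a symbol does, namely a first-order differential operator along the vector field $(\Ad(y)\,p_{\bar{\mathfrak n}}(\Ad(y^{-1})X))^+$ plus a multiplication; commuting two such operators produces the bracket of the corresponding vector fields plus correction terms, which is the Hamiltonian content of $\{\cdot,\cdot\}_1$ on a cotangent bundle.

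Next I would treat the pairs involving the Berezin factor $o(\xi_2)$. For $f=\beta(v(y),\varphi)$, Proposition 4.2(4) identifies $\varphi\mapsto\beta(\varphi,X)$ as the symbol of $\frac1i\,d\sigma(X)$, so ${\hat f}(y,Z)=\frac1i\,d\sigma$ applied to the relevant ${\bar{\mathfrak n}}$-valued coefficient; thus $W(f)$ acts by a $y$-dependent multiplication by an $\End(E)$-valued function (times possibly a differential piece if $v$ also carries $Z$-dependence, but in the stated form it does not). Then $\{u(y),\beta(v(y),\varphi)\}_2=0$ forces $[W(u),W(\beta(v,\varphi))]=0$, which is clear since both are multiplications; the pair $\{\beta(v(y),\varphi),\beta(v'(y),\varphi)\}_2$ reduces, via the $M$-equivariance in Proposition 4.2(3) and part (4), to the statement that the Berezin symbol map intertwines the operator commutator on $\End(E)$ with the Poisson bracket $\{\cdot,\cdot\}_2$ on linear functions — i.e.\ the fact that $s^{-1}$ is an adapted Weyl transform on $o(\xi_2)$ — together with the fact that $d\sigma$ is a Lie algebra homomorphism $[\,d\sigma(U),d\sigma(U')\,]=d\sigma([U,U'])$; and finally the mixed pair $\{(w(y),Z),\beta(v(y),\varphi)\}_2=0$ (no shared variables), which pairs a differential operator on ${\bar N}$ against an $\End(E)$-valued multiplication, giving a commutator that is again a multiplication — one checks it equals $-i$ times $W$ of the $\{\cdot,\cdot\}_1$-bracket $\{(w(y),Z),\beta(v(y),\varphi)\}_1=\sum_k w_k(y)\,\partial$-derivative of $\beta(v(y),\varphi)$ in the appropriate direction, which is exactly the contribution $ab\{u,v\}_1$ in the product formula.

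The main obstacle I expect is the momentum-momentum pairing, i.e.\ commuting two operators of the form in Lemma 4.1: here the naive guess (bracket of vector fields) must be corrected by the subprincipal/trace terms coming from the $\exp(Z/2)$ shift in (4.1), and one must check that these conspire to give precisely $-iW(\{f,g\}_p)$ with no leftover. The clean way around this is to note that the stated symbols are at most linear in $Z$ and at most degree one in $\varphi$, so all the relevant Weyl-product expansions terminate after the first-order term; the Moyal-type bracket truncates exactly to the Poisson bracket, and the Berezin factor contributes only through the single commutator $[d\sigma(\cdot),d\sigma(\cdot)]$. Thus the whole verification is a finite, exact computation with no remainder, and I would organize it as the six cases above, citing Lemma 4.1 and Proposition 4.2 at the appropriate points and invoking the classical Weyl-calculus identities of \cite{Ho}, \cite{Vo} for the purely base-and-momentum cases.
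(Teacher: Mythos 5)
Your proposal is correct and follows essentially the same route as the paper: the paper likewise proves the lemma by bilinear reduction to cases and then direct computation using the explicit operator formulas (4.2)--(4.4), working out the momentum-momentum pair $f=w(y)z_k$, $g=w'(y)z_l$ in detail and asserting the remaining cases are similar. The one cosmetic caveat is that the ``classical Weyl-calculus identities'' of \cite{Ho}, \cite{Vo} cannot be cited verbatim because the calculus here is built on the nonabelian group $\bar N$ via $y\exp(T/2)$ rather than on $\mathbb{R}^n$, so each case must be checked directly from (4.2)--(4.4) as you in fact propose to do, and your crosswise references should read Lemma 4.2 and Proposition 4.1 in the paper's numbering.
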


\begin{proof} We can prove this lemma by a case-by-case
verification. The computations are easy but tedious. For instance,
take $f(y,Z,\varphi)=w(y)z_k$ and $g(y,Z,\varphi)=w'(y)z_l$. For
$Y\in {\bar {\mathfrak n}}$ and $u\in C^{\infty}({\bar N})$, we set
$Y(u)(y):=\frac {d}{dt}u(y\exp (tY))\vert_{t=0}$ for each $y\in
{\bar N}$. We can easily verify that
\begin{equation*}\{f,g\}_p=-E_k(w')(y)w(y)z_l+E_l(w)(y)w'(y)z_k+w(y)w'(y)
\{z_k,z_l\}\end{equation*} where $\{z_k,z_l\}=\beta
(\theta(Z),[E_k,E_l])$. This implies that
\begin{align*} W(-i&\{f\,,\,g\})\phi=\frac {1}{2}E_lE_k(w)w'\phi-\frac
{1}{2}wE_kE_l(w')\phi\\&-E_k(w')wE_l(\phi)+E_l(w)w'E_k(\phi)-ww'[E_k,E_l](\phi)
\end{align*} which is precisely $[W(f),W(g)]$. The calculations in
the other cases are similar.
\end{proof}

\section {Adapted Weyl correspondence for $\pi$}
In this section, we first compute the Berezin-Weyl symbol of the
operator $-id\pi(X)$ for $X\in {\mathfrak g}$. This dequantization
process allows us to obtain an explicit symplectomorphism from
${\bar N}\times {\bar{\mathfrak n}}\times o(\xi_2)$ onto a dense
open subset of the orbit $O(\xi_0)$ and then to construct an adapted
Weyl correspondence on $O(\xi_0)$.

\begin{proposition} Let $\Psi$ be the map from ${\bar N}\times
{\bar{\mathfrak n}}\times o(\xi_2)$ to $\mathfrak g$ defined by
\begin{equation*}\Psi (y,Z,\varphi)=\Ad ({\tilde k}(y))\,\xi_1+\Ad
(y)(\varphi-\theta (Z)).\end{equation*} Then, for each $X\in
{\mathfrak g}$, the Berezin-Weyl symbol of the operator $-id\pi (X)$
is the P-symbol $f_X$ defined by
\begin{equation*}f_X(y,Z,\phi)=\beta (\Psi
(y,Z,\varphi)\,,\,X)).\end{equation*} \end{proposition}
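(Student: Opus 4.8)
The plan is to verify directly that $W(f_X)=-id\pi(X)$ as operators on $C_0({\bar N},E)$. Since the Berezin-Weyl calculus is injective (apply Proposition 4.1(1) fibrewise over ${\bar N}\times{\bar{\mathfrak n}}$, together with the injectivity of the Weyl map), this is exactly the assertion that $f_X$ is the Berezin-Weyl symbol of $-id\pi(X)$. Expanding $\Psi$ and using the bilinearity and $\Ad$-invariance of $\beta$, we write
\[f_X(y,Z,\varphi)=\beta\bigl(\Ad({\tilde k}(y))\xi_1,X\bigr)+\beta\bigl(\varphi,\Ad(y^{-1})X\bigr)-\beta\bigl(\theta(Z),\Ad(y^{-1})X\bigr),\]
and I would treat the three summands separately, each being of a type already handled in Section 4.

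For the first summand I would use that $\mathfrak a$ is $\beta$-orthogonal to $\mathfrak k\oplus\mathfrak n$ (because $\mathfrak a\subset V$, $V\perp\mathfrak k$, and $\mathfrak a\perp\mathfrak g_\lambda$ for $\lambda\neq0$) to rewrite $\beta(\Ad({\tilde k}(y))\xi_1,X)=\beta(\xi_1,\Ad({\tilde k}(y)^{-1})X)=\nu\bigl({\tilde p}_{\mathfrak a}(\Ad({\tilde k}(y)^{-1})X)\bigr)$, a function of $y$ alone, so by (4.3) its Weyl operator is multiplication by this function. For the second summand, $\varphi\in o(\xi_2)\subset\mathfrak m$ and $\mathfrak m$ is $\beta$-orthogonal to $\bar{\mathfrak n}\oplus\mathfrak a\oplus\mathfrak n$ (Cartan orthogonality together with $[\mathfrak m,\mathfrak g_\lambda]\subseteq\mathfrak g_\lambda$, which forces $\Tr(\ad U\,\ad Y)=0$ for $U\in\mathfrak m$ and $Y\in\mathfrak g_\lambda$), hence $\beta(\varphi,\Ad(y^{-1})X)=\beta(\varphi,p_{\mathfrak m}(\Ad(y^{-1})X))$; by Proposition 4.1(4) and linearity of the Berezin symbol map, for each fixed $y$ this is the symbol of the operator $-id\sigma\bigl(p_{\mathfrak m}(\Ad(y^{-1})X)\bigr)$, which is independent of $Z$, so the operator-valued form of (4.3) sends $\phi$ to $-id\sigma(p_{\mathfrak m}(\Ad(y^{-1})X))\phi$. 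For the third summand, $\theta(Z)\in\mathfrak n$ and $\mathfrak n\perp\mathfrak m\oplus\mathfrak a\oplus\mathfrak n$, so $-\beta(\theta(Z),\Ad(y^{-1})X)=-\beta(\theta(Z),p_{\bar{\mathfrak n}}(\Ad(y^{-1})X))=\bigl(p_{\bar{\mathfrak n}}(\Ad(y^{-1})X),Z\bigr)$, which is precisely the P-symbol of Lemma 4.2; that lemma then gives the corresponding Weyl operator as $\phi\mapsto-i\rho(p_{\mathfrak a}(\Ad(y^{-1})X))\phi(y)+i(d\phi)(y)\bigl(\Ad(y)p_{\bar{\mathfrak n}}(\Ad(y^{-1})X)\bigr)^+(y)$. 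These computations also exhibit the $\End(E)$-valued coefficient of $f_X$ as affine in $z_1,\dots,z_n$, so $f_X$ is indeed a P-symbol.

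Adding the three contributions and comparing with the formula for $(d\pi(X)\phi)(y)$ in Proposition 2.2 — using $-i\cdot i\nu=\nu$ on the first term and the overall factor $-i$ on the other three — the two expressions agree term by term, which proves the proposition. I expect the only real work to be the bookkeeping of the orthogonality relations among $\mathfrak a$, $\mathfrak m$, $\mathfrak n$, $\bar{\mathfrak n}$, $\mathfrak k$ and of the signs coming from the factors of $i$ (in particular in passing from Proposition 4.1(4) to $-id\sigma$ and from Lemma 4.2 to its two terms); no idea beyond Proposition 2.2, Lemma 4.2 and Proposition 4.1 should be required.
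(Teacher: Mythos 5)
Your proof is correct and uses exactly the same ingredients as the paper's (Proposition 2.2, Proposition 4.1, Lemma 4.2, and the $\beta$-orthogonality of $\mathfrak a,\mathfrak m,\mathfrak n,\bar{\mathfrak n},\mathfrak k$); the only difference is that you run the computation in the reverse direction, expanding $\beta(\Psi(y,Z,\varphi),X)$ into three terms and quantizing each, whereas the paper first writes the symbol of $-id\pi(X)$ as the sum (5.1) and then recognizes it as $\beta(\Psi(y,Z,\varphi),X)$. This is essentially the same argument, with the orthogonality facts spelled out a bit more explicitly than in the paper.
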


\begin{proof} Let $X\in
{\mathfrak g}$. Recall that an explicit expression for $-id\pi (X)$
was given in Proposition 2.2. Then, by Proposition 4.1 and Lemma
4.2, we immediately see that the Berezin-Weyl symbol of the operator
$-id\pi (X)$ is the function $f_X$ defined by
\begin{align}f_X(y,Z,\phi)&=(p_{\bar {\mathfrak n}}\bigl(\Ad
(y^{-1})X),Z\bigr)+\beta \bigl(\varphi,p_{\mathfrak m}\bigl(\Ad
(y^{-1})X\bigr) \bigr)\\&+\nu \bigl( {\tilde p}_{\mathfrak a}(\Ad
({\tilde k}(y)^{-1})X \bigr).\nonumber \end{align} Now, let
$(y,Z,\phi)\in {\bar N}\times {\bar{\mathfrak n}}\times o(\xi_2)$.
Since the map $X\rightarrow f_X(y,Z,\phi)$ is linear, there exists
an element $\Psi (y,Z,\varphi)$ in $\mathfrak g$ such that
$f_X(y,Z,\phi)=\beta (\Psi (y,Z,\varphi)\,,\,X)$ for each $X\in
{\mathfrak g}$. More precisely, by using Equality (5.1) we get
\begin{align*} f_X&(y,Z,\phi)=-\beta \bigl(\Ad (y^{-1})X,\theta
(Z)\bigr) +\beta \bigl(\Ad (y^{-1})X,\varphi \bigr)\\&+\beta \bigl(
\xi_1,\Ad ({\tilde k}(y)^{-1})X\bigr)=\beta \bigl(\Ad ({\tilde
k}(y))\,\xi_1+\Ad (y)(\varphi-\theta (Z)),X\bigr).\end{align*} This
gives the desired result.
\end{proof}
\par We denote by $\omega$ the Kirillov 2-form on $O(\xi_0)$ and by
$\{\cdot,\cdot\}$ the corresponding Poisson bracket. Let  ${\tilde
O}(\xi_0)$ denote the dense open subset $\Ad ({\bar N}MAN)\xi_0$ of
$\mathfrak g$.

\begin{proposition} The map $\Psi$ is a symplectomorphism from
$({\bar N}\times {\bar{\mathfrak n}}\times
o(\xi_2),\omega_p=d\alpha\otimes \omega_2)$ onto $({\tilde
O}(\xi_0),\omega )$.\end{proposition}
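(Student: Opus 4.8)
The plan is to show that $\Psi$ is a diffeomorphism onto $\tilde O(\xi_0)$ and that it pulls back the Kirillov form $\omega$ to $\omega_p$. For the first part, I would use the Bruhat-type decomposition already invoked in Section 2: since $\bar NMAN$ is open dense in $G$, every element of $\tilde O(\xi_0)=\Ad(\bar NMAN)\xi_0$ can be written as $\Ad(\bar n\,m\,a\,n)\xi_0$. Because $\xi_0=\xi_1+\xi_2$ with $\xi_1\in\mathfrak a$, $\xi_2\in\mathfrak m$, and $a,n$ centralize $\xi_1$ while $a$ acts trivially and $n$ nilpotently, one checks that $\Ad(\bar n\,m\,a\,n)\xi_0 = \Ad(\bar n\,m)\xi_1 + \Ad(\bar n\,m\,a\,n)\xi_2$; rewriting $\bar n = \tilde k(\bar n)\tilde a(\bar n)\tilde n(\bar n)$ via the Iwasawa decomposition and using that $\tilde a(\bar n)\tilde n(\bar n)$ fixes $\xi_1$, the first term becomes $\Ad(\tilde k(\bar n))\xi_1 = \Ad(\tilde k(y))\xi_1$ for the appropriate $y\in\bar N$. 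The second term lies in $\Ad(y)(\mathfrak m\oplus\bar{\mathfrak n}\oplus\cdots)$ and after projecting can be matched against $\Ad(y)(\varphi-\theta(Z))$ with $\varphi\in o(\xi_2)$ and $Z\in\bar{\mathfrak n}$. Injectivity follows by recovering $y$ from $\tilde k(y)$ (via Lemma 3.2, $M$ is the stabilizer of $\xi_1$, and $\tau$ is injective on $\bar N$), then $\varphi$ and $Z$ from the $\bar{\mathfrak n}\oplus\mathfrak m$-components of $\Ad(y^{-1})\Psi(y,Z,\varphi)$.

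For the symplectic statement, the cleanest route avoids computing $\Psi^*\omega$ directly and instead uses the dequantization result of Proposition 5.1 together with Lemma 4.3. By Proposition 5.1, for each $X\in\mathfrak g$ the function $f_X = \beta(\Psi(\cdot),X)$ is the Berezin-Weyl symbol of $-id\pi(X)$, i.e. $W(f_X)=-id\pi(X)$. Since $\pi$ is a Lie algebra representation, $[d\pi(X),d\pi(Y)]=d\pi([X,Y])$, hence $[W(f_X),W(f_Y)] = -W(f_{[X,Y]})$. On the other hand, the functions $f_X$ are exactly of the form $u(y)+\beta(v(y),\varphi)+\sum_k w_k(y)z_k$ covered by Lemma 4.3, so $[W(f_X),W(f_Y)] = -i\,W(\{f_X,f_Y\}_p)$. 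Comparing, and using injectivity of $W$ on these symbols (which follows from injectivity of the Berezin map $s$, Proposition 4.1(1), together with the fact that the Weyl calculus is injective), we get $\{f_X,f_Y\}_p = -i\,f_{[X,Y]}$. But on $\tilde O(\xi_0)\subset\mathfrak g$ the Kirillov bracket satisfies $\{\beta(\cdot,X),\beta(\cdot,Y)\} = \beta(\cdot,[X,Y])$ (up to the sign convention fixed by $\omega$), i.e. $\{f_X\circ\Psi^{-1},f_Y\circ\Psi^{-1}\} = f_{[X,Y]}\circ\Psi^{-1}$ after reconciling the factor of $i$ with the normalization of $\omega$. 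Since the functions $\beta(\cdot,X)$, $X\in\mathfrak g$, have differentials spanning every cotangent space of $\tilde O(\xi_0)$, matching of Poisson brackets on this spanning family forces $\Psi^*\omega = \omega_p$.

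The main obstacle is the bookkeeping in the first paragraph: carefully tracking how the four Bruhat factors $\bar n, m, a, n$ act on $\xi_0$ and reorganizing the result into the precise form $\Ad(\tilde k(y))\xi_1 + \Ad(y)(\varphi-\theta(Z))$, in particular identifying which part of $\Ad(\bar n m a n)\xi_2$ contributes to $\varphi\in o(\xi_2)$ versus to the $-\Ad(y)\theta(Z)$ term in $\bar{\mathfrak n}$, and verifying that $\varphi$ ranges over all of $o(\xi_2)$ and $Z$ over all of $\bar{\mathfrak n}$. A secondary subtlety is getting the sign and the factor $i$ consistent between the three brackets $\{\cdot,\cdot\}_p$, $[\cdot,\cdot]$, and the Kirillov bracket $\{\cdot,\cdot\}$ on $O(\xi_0)$; once the conventions are pinned down, the argument that Poisson-bracket matching on a bracket-generating family of functions implies equality of symplectic forms is standard.
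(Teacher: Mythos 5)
Your second paragraph (the symplectic part of the argument) tracks the paper's own proof almost exactly: $f_X\circ\Psi=\tilde X$, Lemma~4.3 gives $[W(f_X),W(f_Y)]=-iW(\{f_X,f_Y\}_p)$, the representation property gives $[W(f_X),W(f_Y)]=-iW(f_{[X,Y]})$, injectivity of the calculus yields $\{f_X,f_Y\}_p=f_{[X,Y]}$ (note: no factor of $-i$ here, as you half-suspected), and matching Poisson brackets on the linear functionals $\tilde X$ forces $\Psi^\ast\omega=\omega_p$. That part is fine and is the same route.

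The first paragraph, however, contains a real error, not just bookkeeping. You assert that $n\in N$ ``centralizes'' $\xi_1$ and that ``$\tilde a(\bar n)\tilde n(\bar n)$ fixes $\xi_1$.'' Neither holds: $\xi_1$ is a \emph{regular} element of $\mathfrak a$, so $\ad\xi_1$ acts without kernel on $\mathfrak n$; consequently $\Ad(n)\xi_1\neq\xi_1$ for generic $n\in N$, and $\Ad(\tilde n(\bar n))\xi_1\neq\xi_1$. Only the $A$-factor fixes $\xi_1$. As a result, $\Ad(\bar n m)\xi_1=\Ad(\bar n)\xi_1$ is \emph{not} equal to $\Ad(\tilde k(\bar n))\xi_1$, and the identity $\Ad(\bar n m a n)\xi_0=\Ad(\tilde k(y))\xi_1+\Ad(y)(\varphi-\theta(Z))$ does not drop out of your decomposition. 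The Bruhat-type surjectivity/injectivity argument you sketch therefore does not close. The paper's actual route is different and sidesteps this: it invokes the (cited) fact that the \emph{simpler} map $\Psi_1(y,Z,\varphi)=\Ad(y)\bigl(\xi_1+\varphi-\theta(Z)\bigr)$ is already a diffeomorphism of ${\bar N}\times{\bar{\mathfrak n}}\times o(\xi_2)$ onto $\tilde O(\xi_0)$, and then shows that $\Psi=\Psi_1\circ\Phi$ where $\Phi$ is the diffeomorphism $(y,Z,\varphi)\mapsto(y,Z',\varphi)$ with $Z'$ differing from $Z$ by the $\bar{\mathfrak n}$-valued term $\theta\bigl(\Ad(\tilde n(y)^{-1})\xi_1-\xi_1\bigr)$; the key identity used there is $y\,\tilde n(y)^{-1}=\tilde k(y)\tilde a(y)$, which replaces your false ``$\tilde n$ fixes $\xi_1$'' by a correct conjugation relation. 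If you want to avoid citing the $\Psi_1$ result you would need to reprove it, which is more work than you allotted; either way, the step you wrote down as the ``main obstacle'' is, as written, wrong rather than merely tedious.
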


\begin{proof} By \cite{CaLMP}, Proposition 1 and \cite{CaBeit1}, Proposition 4.3,
the map $\Psi_1$ from ${\bar N}\times {\bar{\mathfrak n}}\times
o(\xi_2)$ onto ${\tilde O}(\xi_0)$ defined by
\begin{equation*}\Psi_1 (y,Z,\varphi)=\Ad (y)(\xi_1+\varphi-\theta
(Z))\end{equation*} is a diffeomorphism. Note that if
\begin{equation*}Z'=Z+\theta \bigl( \Ad ({\tilde
n}(y)^{-1})\xi_1-\xi_1\bigr)\end{equation*} then
\begin{align*}\Psi (y,Z,\varphi)&=\Ad
({\tilde k}(y))\xi_1+\Ad (y) \bigl( \varphi-\theta (Z')-\Ad( {\tilde
n}(y)^{-1})\xi_1+\xi_1\bigr)\\&=\Ad ({\tilde k}(y))\xi_1-\Ad(
y\,{\tilde n}(y)^{-1})\xi_1+\Psi_1(y,Z,\varphi)\end{align*} Thus,
since $y\,{\tilde n}(y)^{-1}={\tilde k}(y){\tilde a}(y)$, we obtain
$\Psi (y,Z,\varphi)=\Psi_1(y,Z',\varphi)$. Hence $\Psi$ is a
diffeomorphism from ${\bar N}\times {\bar{\mathfrak n}}\times
o(\xi_2)$ onto ${\tilde O}(\xi_0)$.
\par Now, we show that $\Psi$ is also a symplectomorphism by
following the method of \cite{CaDGA}, Theorem 6.3. Recall that for
$X\in {\mathfrak g}$, $\tilde X$ denotes the function on $O(\xi_0)$
defined by ${\tilde X}(\xi)=\beta (\xi, X)$. Observe that $f_X\circ
\Psi={\tilde X}$. Let $X$ and $Y$ in $\mathfrak g$. One the one
hand, by Proposition 5.1 and Lemma 4.3, we have
\begin{equation*}[ W(f_X)\,,\,  W(f_Y)]=-i
W(\{f_X\,,\,f_Y\}_p).\end{equation*}  On the other hand, we have
\begin{equation*}[ W(f_X)\,,\, W(f_Y)]=[-id\pi (X),-id\pi
(Y)]=-d\pi ([X\,,\,Y])= -i  W(f_{[X,Y]}).\end{equation*} Then we get
$f_{[X,Y]}= \{f_X\,,\,f_Y\}_p$. Since $\tilde {[X,Y]}=\{\tilde {X},
\tilde {Y}\}_0$, we obtain \begin{equation*}\{\tilde {X}, \tilde
{Y}\}\,\circ \, \Psi=\{\tilde {X} \circ \Psi \,, \, \tilde {Y} \circ
\Psi \}_p.\end{equation*}  Hence $\Psi$ is a symplectomorphism.
\end{proof}

\par Now, we obtain an adapted Weyl correspondence on $O(\xi_0)$ by
transferring to $O(\xi_0)$ the Berezin-Weyl calculus on ${\bar
N}\times {\bar{\mathfrak n}}\times o(\xi_2)$. We say that a smooth
function $f$ on $O(\xi _0)$ is a symbol (resp. a  P-symbol, an
S-symbol) on ${O}(\xi _0)$ if $f\circ \Psi $ is a symbol (resp. a
P-symbol, an  S-symbol) for the Berezin-Weyl calculus on ${\bar
N}\times {\bar{\mathfrak n}}\times o(\xi_2)$ .

\begin{proposition} Let $\mathcal A$ be the space of all
P-symbols on $ O(\xi _0)$ and let $\mathcal B$ be the space of
differential operators on $C^{\infty}({\bar N},E)$. Then the map
${\mathcal W}: {\mathcal A}\rightarrow {\mathcal B}$ defined by the
${\mathcal W}(f)=W(f\circ \Psi )$ is an adapted Weyl correspondence
in the sense of \cite{CaDGA}, Section 6.1, that is, the map
${\mathcal W}$ satisfies the following properties

\begin{enumerate}

\item The map $\mathcal W$ is a linear isomorphism from $\mathcal A$
onto $\mathcal B$;

 \item the elements of $\mathcal B$ preserve a fixed dense domain $D$ of
 $L^2({\bar N},E)$;

\item the constant function $1$ belongs to $\mathcal A$, the identity
operator $I$ belongs to $\mathcal B$ and ${\mathcal W}(1)=I$;

\item $A\in {\mathcal B}$ and $B\in {\mathcal B}$ implies $AB\in
{\mathcal B}$;

\item for each $f$ in $\mathcal A$ the complex conjugate $\bar {f}$
of $f$ belongs to ${\mathcal A}$ and the adjoint operator ${\mathcal
W}(f)^{\ast}$ is an extension of ${\mathcal W}({\bar f})$;

\item the elements of $D$ are $C^{\infty}$-vectors for the
representation ${\pi}$, the functions $\tilde {X}$ ($X\in {\mathfrak
g}$) are in $\mathcal A$ and ${\mathcal W}(i\tilde X)\,\phi=d{\pi}
(X)\phi$ for each $X\in {\mathfrak g}$ and each $\phi \in D$.
\end{enumerate}
\end{proposition}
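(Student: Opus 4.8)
The plan is to verify the six properties essentially one at a time, exploiting the transfer principle provided by the symplectomorphism $\Psi$ (Proposition 5.2) together with the structural facts already established about the Berezin-Weyl calculus in Section 4. The key observation is that $\mathcal{W}(f)=W(f\circ\Psi)$ is, by construction, nothing but the Berezin-Weyl calculus $W$ on ${\bar N}\times{\bar{\mathfrak n}}\times o(\xi_2)$ read through $\Psi$; so everything reduces to known properties of $W$ on the class of P-symbols. I would take the dense domain $D$ to be $C_0({\bar N},E)$ (or the space of $E$-valued functions on ${\bar N}$ that are smooth with suitable decay), which is exactly the domain on which $d\pi(X)$ was computed in Proposition 2.2 and on which formulas (4.2)--(4.4) hold.

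First I would settle (3): the constant function $1$ on $O(\xi_0)$ pulls back under $\Psi$ to the constant function $1$, and by Formula (4.3) we have $W(1)=I$, so $\mathcal{W}(1)=I$; this also shows $I\in\mathcal{B}$. Next, (2) and (4): Formula (4.2) exhibits $W(f)$, for $f$ a P-symbol, as a differential operator on $C^{\infty}({\bar N},E)$ with smooth coefficients — it is a finite linear combination of compositions of the $i\partial_{z_j}$ evaluated after a shift $y\mapsto y\exp Z$, hence preserves $C_0({\bar N},E)$ and is closed under composition; so $\mathcal{B}$ consists of differential operators preserving $D$, giving both (2) and (4). For (1), surjectivity onto $\mathcal{B}$ and injectivity follow because, by (4.2), the P-symbol of a given differential operator can be read off term by term from its coefficients (the Berezin calculus symbol map $s$ is injective by Proposition 4.1(1), which handles the ${\rm End}(E)$-valued coefficient part), and $\Psi$ is a diffeomorphism so pulling back along $\Psi^{-1}$ is a bijection between P-symbols on $O(\xi_0)$ and P-symbols on ${\bar N}\times{\bar{\mathfrak n}}\times o(\xi_2)$. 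Property (5) is the Berezin-Weyl analogue of the standard fact that the Weyl transform of the complex conjugate symbol is the formal adjoint: complex conjugation of $f$ corresponds, via Proposition 4.1(2) on the $o(\xi_2)$-factor and integration by parts in (4.1), to passing to the adjoint of $W(f)$ on $L^2({\bar N},E)$; one then notes $\overline{f\circ\Psi}=\bar f\circ\Psi$.

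Property (6) is the heart of the matter, and I expect it to be the main point requiring care rather than a genuine obstacle, since the essential computation has already been done: Proposition 5.1 states precisely that the Berezin-Weyl symbol of $-id\pi(X)$ is $f_X=\beta(\Psi(\cdot),X)=\tilde X\circ\Psi$, which is to say $W(f_X)=-id\pi(X)$, i.e. $\mathcal{W}(i\tilde X)=\mathcal{W}(-(-i\tilde X))$; unwinding, $\mathcal{W}(i\tilde X)\phi=W(i\tilde X\circ\Psi)\phi=W(if_X)\phi=d\pi(X)\phi$ for $\phi\in D$. The remaining assertions in (6) — that $\tilde X\in\mathcal{A}$ (clear, since $f_X=\tilde X\circ\Psi$ is a P-symbol by Proposition 5.1) and that the elements of $D$ are $C^{\infty}$-vectors for $\pi$ — are standard: $C_0({\bar N},E)$ lies in the Gårding space, and more directly the explicit formula (2.3) shows $g\mapsto\pi(g)\phi$ is smooth for $\phi\in C_0({\bar N},E)$ because the Iwasawa and $\bar N MAN$ projection maps are smooth on the relevant open sets. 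The one place demanding attention is bookkeeping: making sure the domain $D$ is simultaneously stable under all $W(f)$, $f\in\mathcal{A}$, consists of smooth vectors, and is dense — all of which hold for $D=C_0({\bar N},E)$ by the remarks above. I would close by noting that (1)--(6) are exactly the axioms of \cite{CaDGA}, Section 6.1, so $\mathcal{W}$ is an adapted Weyl correspondence on $O(\xi_0)$.
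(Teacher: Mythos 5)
Your proposal is correct and follows exactly the paper's own (very terse) argument: the paper simply says that (1)--(4) hold with $D=C_0(\bar N,E)$, that (5) follows from Proposition 4.1(2), and that (6) follows from Proposition 5.1, which is precisely the skeleton you flesh out. The only blemish is the throwaway line ``$\mathcal{W}(i\tilde X)=\mathcal{W}(-(-i\tilde X))$,'' which is a typo rather than a mathematical step; the unwinding you give right after it, $\mathcal{W}(i\tilde X)\phi=W(if_X)\phi=d\pi(X)\phi$, is the correct and complete argument.
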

\begin{proof} The properties (1)-(4) are satisfied with $D=C_0({\bar N},E)$.
Property (5) is a consequence of (2) of Proposition 4.1. Property
(6) follows from Proposition 5.1.
\end{proof}

\section {Adapted Weyl correspondence for $\pi_0$}
In this section, we use the same method as in Section 5 to get a
symplectomorphism from ${\bar N}\times {\bar{\mathfrak n}}\times
o(\xi_2)$ onto a dense open subset of the orbit
$O(\xi_1,\xi_2)\subset {\mathfrak g}_0$ and then to construct an
adapted Weyl correspondence on $O(\xi_1,\xi_2)$.

\begin{proposition} Let $\Psi_0$ be the map from ${\bar N}\times
{\bar{\mathfrak n}}\times o(\xi_2)$ to ${\mathfrak g}_0$ defined by
\begin{equation*}\Psi_0 (y,Z,\varphi)=\Bigl(\Ad ({\tilde k}(y))
\,\xi_1\,,\,p_{\mathfrak k}^c
\bigl( \Ad (y)(\varphi-\theta (Z))\bigr)\Bigr).\end{equation*} Then,
for each $(v,U)\in {\mathfrak g}_0$, the Berezin-Weyl symbol of the
operator $-id\pi (X)$ is the P-symbol $f_{(v,U)}$ defined by
\begin{equation*}f_{(v,U)}(y,Z,\phi)=\langle \Psi_0
(y,Z,\varphi)\,,\,(v,U)\rangle.\end{equation*} \end{proposition}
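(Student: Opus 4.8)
The plan is to run exactly the dequantization argument of the proof of Proposition 5.1, with Proposition 3.3 (the formula for $d\pi_0$) in place of Proposition 2.2. First I would decompose $-id\pi_0(v,U)$, acting on $C_0({\bar N},E)$, as the sum of three operators: the multiplication operator $\phi\mapsto \beta\bigl(\Ad({\tilde k}(y))\xi_1,v\bigr)\phi(y)$; the fiberwise operator $\phi\mapsto -i\,d\sigma\bigl(p_{\mathfrak m}(\Ad(y^{-1})U)\bigr)\phi(y)$; and the first-order differential operator $\phi\mapsto -i\rho\bigl(p_{\mathfrak a}(\Ad(y^{-1})U)\bigr)\phi(y)+i\,(d\phi)(y)\bigl(\Ad(y)\,p_{\bar{\mathfrak n}}(\Ad(y^{-1})U)\bigr)^+(y)$.

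Then I would read off the Berezin-Weyl symbol of each summand. By (4.3) the first has symbol $\beta\bigl(\Ad({\tilde k}(y))\xi_1,v\bigr)$; by part (4) of Proposition 4.1 the second has symbol $\beta\bigl(\varphi,p_{\mathfrak m}(\Ad(y^{-1})U)\bigr)$; and by Lemma 4.2, applied with $X=U$, the third is $W(f)$ with $f(y,Z,\varphi)=\bigl(p_{\bar{\mathfrak n}}(\Ad(y^{-1})U),Z\bigr)$. Since $W$ is linear, adding these shows that the Berezin-Weyl symbol of $-id\pi_0(v,U)$ is
\begin{equation*}
f_{(v,U)}(y,Z,\varphi)=\bigl(p_{\bar{\mathfrak n}}(\Ad(y^{-1})U),Z\bigr)+\beta\bigl(\varphi,p_{\mathfrak m}(\Ad(y^{-1})U)\bigr)+\beta\bigl(\Ad({\tilde k}(y))\xi_1,v\bigr),
\end{equation*}
which is the analogue of (5.1).

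It then remains to recognize this as $\langle \Psi_0(y,Z,\varphi),(v,U)\rangle$. Using the $\beta$-orthogonality of the pieces of ${\mathfrak g}={\bar{\mathfrak n}}\oplus{\mathfrak m}\oplus{\mathfrak a}\oplus{\mathfrak n}$, together with the facts that $\theta(Z)\in{\mathfrak n}$ and $\varphi\in{\mathfrak m}$, I get $\bigl(p_{\bar{\mathfrak n}}(\Ad(y^{-1})U),Z\bigr)=-\beta\bigl(\Ad(y^{-1})U,\theta(Z)\bigr)$ and $\beta\bigl(\varphi,p_{\mathfrak m}(\Ad(y^{-1})U)\bigr)=\beta\bigl(\varphi,\Ad(y^{-1})U\bigr)$, so by $\Ad$-invariance of $\beta$ the first two terms combine into $\beta\bigl(U,\Ad(y)(\varphi-\theta(Z))\bigr)$. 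Since $U\in{\mathfrak k}$ and $\beta(V,{\mathfrak k})=0$, this equals $\beta\bigl(U,p^c_{\mathfrak k}(\Ad(y)(\varphi-\theta(Z)))\bigr)$; likewise $\Ad({\tilde k}(y))\xi_1\in V$ because $\Ad(K)$ preserves the Cartan decomposition. Recalling that under ${\mathfrak g}_0^{\ast}\simeq V\times{\mathfrak k}$ the pairing is $\langle(w,W),(v,U)\rangle=\beta(w,v)+\beta(W,U)$, this gives exactly $f_{(v,U)}(y,Z,\varphi)=\langle\Psi_0(y,Z,\varphi),(v,U)\rangle$.

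The computation is entirely parallel to Proposition 5.1, so I do not expect any genuine difficulty. The only points calling for a little care are verifying that $\Psi_0$ really takes values in ${\mathfrak g}_0=V\times{\mathfrak k}$ — the first slot lies in $V$ automatically by $\Ad(K)$-invariance of $V$, while the second slot needs the projection $p^c_{\mathfrak k}$ — and keeping track of which $\beta$-pairings vanish so that the projections $p_{\bar{\mathfrak n}}$, $p_{\mathfrak m}$ and $p^c_{\mathfrak k}$ may be inserted or dropped at will.
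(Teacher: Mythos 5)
Your proof is correct and follows exactly the route the paper takes: decompose $-id\pi_0(v,U)$ using Proposition 3.3, read off the Berezin--Weyl symbol of each piece via (4.3), Proposition 4.1(4), and Lemma 4.2, and then recombine the resulting expression into the pairing with $\Psi_0$ by $\beta$-orthogonality. The only difference is that you spell out the final algebraic manipulation (inserting and removing the projections $p_{\bar{\mathfrak n}}$, $p_{\mathfrak m}$, $p^c_{\mathfrak k}$) which the paper leaves implicit.
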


\begin{proof} The proof is quite similar to that of Proposition 5.1. Let $(v,U)$ be
an element of ${\mathfrak g}_0$. By using the explicit expression
for $-id\pi_0(u,V)$ given in Proposition 3.3 and Lemma 4.2, we
obtain
\begin{align*}&f_{(v,U)}(y,Z,\varphi)=(p_{\bar {\mathfrak n}}(\Ad
(y^{-1})U),Z)+\beta (\Ad ({\tilde k}(y))\,\xi_1,v)\\&+\beta (\varphi
, p_{\mathfrak m}(\Ad (y^{-1})U))=\beta (\Ad (y)(\varphi-\theta
(Z)),U)+\beta (\Ad ({\tilde k}(y))\,\xi_1,v). \end{align*} This
gives the result.
\end{proof}

\par We denote by $\omega_0$ the Kirillov 2-form on $O(\xi_1,\xi_2)$
and by $\{\cdot,\cdot\}_0$ the corresponding Poisson bracket. Let
${\tilde O}(\xi_1,\xi_2)$ denote the dense open subset of
$O(\xi_1,\xi_2)$ defined by
\begin{equation*}{\tilde O}(\xi_1,\xi_2)=\{(v,k)\cdot
(\xi_1,\xi_2):v\in V,\,k\in K\cap {\bar N}MAN\}.\end{equation*}

\begin{proposition} The map $\Psi_0$ is a symplectomorphism from
$({\bar N}\times {\bar{\mathfrak n}}\times
o(\xi_2),\omega_p=d\alpha\otimes \omega_2)$ onto $({\tilde
O}(\xi_1,\xi_2),\omega_0)$.
\end{proposition}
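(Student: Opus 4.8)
The plan is to mimic the two-step proof of Proposition 5.2: first show that $\Psi_0$ is a diffeomorphism onto $\tilde O(\xi_1,\xi_2)$, and then upgrade this to a symplectomorphism by the dequantization argument of \cite{CaDGA}, Theorem 6.3, now supplied by Proposition 6.1 in place of Proposition 5.1.

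For the diffeomorphism I would fiber both sides over the orbit $O_V(\xi_1)\simeq K/M$ through the first component. On the source, the first component of $\Psi_0$ is $y\mapsto\Ad(\tilde k(y))\xi_1=\tau(y)$, a diffeomorphism of $\bar N$ onto a dense open subset of $O_V(\xi_1)$ by \cite{Wa}, Lemma 7.6.8. One checks that $\tilde k(y)=y\,\tilde n(y)^{-1}\tilde a(y)^{-1}$ lies in $K\cap\bar N MAN$ (because $\bar N MAN$ is stable under right multiplication by $A$ and $N$), and conversely that every $k\in K\cap\bar N MAN$ satisfies $kM=\tilde k(\bar n(k))M$; hence the image of $\tau$ is exactly the set $\{\Ad(k)\xi_1:k\in K\cap\bar N MAN\}$ of first components of the points of $\tilde O(\xi_1,\xi_2)$. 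By Lemmas 3.1 and 3.2, the fiber of $\tilde O(\xi_1,\xi_2)$ above $\Ad(\tilde k(y))\xi_1$ is $\{(0,\tilde k(y))\cdot(\xi_1,\varphi'+w):\varphi'\in o(\xi_2),\ w\in\mathfrak m^{\perp}\}$, where $\mathfrak m^{\perp}$ denotes the orthogonal complement of $\mathfrak m$ in $\mathfrak k$, equal by Lemma 3.1 to $\ad\xi_1(V)=p^c_{\mathfrak k}(\mathfrak n)$. Since $p^c_{\mathfrak k}$ commutes with $\Ad(\tilde k(y))$ and $\tilde k(y)^{-1}y=\tilde a(y)\tilde n(y)\in AN$, one rewrites $\Psi_0(y,Z,\varphi)=(0,\tilde k(y))\cdot\bigl(\xi_1,\,p^c_{\mathfrak k}(\Ad(\tilde a(y)\tilde n(y))(\varphi-\theta(Z)))\bigr)$. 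Because $\Ad(AN)$ preserves $\mathfrak m\oplus\mathfrak n$ and acts trivially on $(\mathfrak m\oplus\mathfrak n)/\mathfrak n\simeq\mathfrak m$, the fiber map $(Z,\varphi)\mapsto p^c_{\mathfrak k}(\Ad(\tilde a(y)\tilde n(y))(\varphi-\theta(Z)))$ has $\varphi$ as its $\mathfrak m$-component (relative to $\mathfrak k=\mathfrak m\oplus\mathfrak m^{\perp}$) and is affine and invertible in $Z$, its linear part $-p^c_{\mathfrak k}\circ\Ad(\tilde a(y)\tilde n(y))\circ\theta$ being the composite of the isomorphisms $\theta:\bar{\mathfrak n}\to\mathfrak n$, $\Ad(\tilde a(y)\tilde n(y)):\mathfrak n\to\mathfrak n$ and $p^c_{\mathfrak k}:\mathfrak n\to\mathfrak m^{\perp}$ (the last injective since $\mathfrak n\cap V=0$ and surjective by Lemma 3.1). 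Hence $\Psi_0$ is a fiberwise affine isomorphism over a diffeomorphism of base spaces, with an explicit smooth inverse, so it is a diffeomorphism of $\bar N\times\bar{\mathfrak n}\times o(\xi_2)$ onto $\tilde O(\xi_1,\xi_2)$.

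Granting this, the symplectomorphism property follows exactly as in Proposition 5.2. By Proposition 6.1, $f_{(v,U)}=\widetilde{(v,U)}\circ\Psi_0$, where $\widetilde{(v,U)}$ denotes the function $\zeta\mapsto\langle\zeta,(v,U)\rangle$ on $O(\xi_1,\xi_2)$, and each $f_{(v,U)}$ is a P-symbol of the form covered by Lemma 4.3, so $[W(f_{(v,U)}),W(f_{(v',U')})]=-iW(\{f_{(v,U)},f_{(v',U')}\}_p)$. On the other hand $W(f_{(v,U)})=-id\pi_0(v,U)$ and $d\pi_0$ is a Lie algebra homomorphism, so the same bracket equals $-d\pi_0([(v,U),(v',U')]_0)=-iW(f_{[(v,U),(v',U')]_0})$. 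As $W$ is injective, $\{f_{(v,U)},f_{(v',U')}\}_p=f_{[(v,U),(v',U')]_0}$. Using $f_{(v,U)}=\widetilde{(v,U)}\circ\Psi_0$ and the Kirillov identity $\{\widetilde{(v,U)},\widetilde{(v',U')}\}_0=\widetilde{[(v,U),(v',U')]_0}$ on $O(\xi_1,\xi_2)$, this reads $\{\widetilde{(v,U)}\circ\Psi_0,\widetilde{(v',U')}\circ\Psi_0\}_p=\{\widetilde{(v,U)},\widetilde{(v',U')}\}_0\circ\Psi_0$ for all $(v,U),(v',U')\in\mathfrak g_0$. Since the differentials of the functions $\widetilde{(v,U)}$ span every cotangent space of $O(\xi_1,\xi_2)$, this means $\Psi_0$ is a Poisson map; being a diffeomorphism onto the open subset $\tilde O(\xi_1,\xi_2)$, it is therefore a symplectomorphism.

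I expect the first step to be the main obstacle: one has to identify $\tilde O(\xi_1,\xi_2)$ precisely — both its base $\{\Ad(k)\xi_1:k\in K\cap\bar N MAN\}$ inside $O_V(\xi_1)$ and the affine structure of its fibers — and to check that the $\mathfrak k$-projection occurring in the definition of $\Psi_0$ exactly reproduces the term $[v,\Ad(k)\xi_1]$ of the coadjoint action of $V\rtimes K$; this is where Lemma 3.1 (the description of $\ad\xi_1(V)$) carries the weight and where one must keep careful track of the two decompositions ${\mathfrak g}={\bar{\mathfrak n}}\oplus{\mathfrak m}\oplus{\mathfrak a}\oplus{\mathfrak n}$ and ${\mathfrak g}={\mathfrak k}\oplus V$. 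Once that is done, the second step is purely formal and runs parallel to the proof of Proposition 5.2, the only point to notice being that each $f_{(v,U)}$ has the shape required by Lemma 4.3.
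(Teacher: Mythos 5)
Your proposal is correct and is essentially the paper's own proof: both establish the bijectivity of $\Psi_0$ by factoring through the Iwasawa decomposition of $y$ and invoking Lemma~3.1, and both then run the dequantization argument of Proposition~5.2 (via Proposition~6.1, Lemma~4.3, the injectivity of $W$ and the Kirillov identity) to obtain the Poisson/symplectic property. The only structural difference is cosmetic: you prove directly that $\Psi_0$ is a diffeomorphism by fibering over $O_V(\xi_1)\simeq K/M$ and checking that the fiber map $(Z,\varphi)\mapsto p^c_{\mathfrak k}\bigl(\Ad(\tilde a(y)\tilde n(y))(\varphi-\theta(Z))\bigr)$ is an affine isomorphism onto $o(\xi_2)+\mathfrak m^{\perp}$, whereas the paper proves only bijectivity by solving the explicit system $(a)$--$(d)$ and then deduces that $\Psi_0$ is regular from the nondegeneracy of $\omega_p$ once the identity $\Psi_0^{\ast}\omega_0=\omega_p$ is in hand.
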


\begin{proof} First, we show that for each $\xi \in {\tilde
O}(\xi_1,\xi_2)$ there exists a unique $(y,Z,\varphi)\in {\bar
N}\times {\bar{\mathfrak n}}\times o(\xi_2)$ such that $\Psi_0
(y,Z,\varphi)=\xi$. Let $\xi \in {\tilde O}(\xi_1,\xi_2)$. Then we
can write $\xi=(v,k)\cdot (\xi_1,\xi_2)$ with $v\in V$ and $k\in
K\cap {\bar N}MAN$. Clearly, the equation $\Psi_0 (y,Z,\varphi)=\xi$
is equivalent to

\begin{equation*} \left \lbrace \begin{aligned} (a)&\quad \Ad ({\tilde k}(y))\,\xi_1
=\Ad (k) \,\xi_1  \\ (b)&\quad p_{\mathfrak k}^c \bigl( \Ad
(k^{-1}y)(\varphi-\theta (Z))\bigr)=\xi_2+[\Ad (k^{-1})v,\xi_1].
\end{aligned}\right.\end{equation*}
Equation (a) determines $y$ uniquely. Moreover, $m:=k^{-1}{\tilde
k}(y)$ is an element of $M$. We set $n'(y)=m{\tilde a}(y){\tilde
n}(y){\tilde a}(y)^{-1}m^{-1}$. Then $n'(y) \in N$ and we have
\begin{equation*} k^{-1}y=k^{-1}{\tilde k}(y){\tilde a}(y){\tilde n}(y)
=m{\tilde a}(y){\tilde n}(y)=n'(y)m{\tilde a}(y).\end{equation*}
Thus, setting $Y:=\Ad (n'(y)m)\varphi-\Ad (m)\varphi \in {\mathfrak
n}$, we can write
\begin{equation*}p_{\mathfrak k}^c \bigl( \Ad
(k^{-1}y)(\varphi-\theta (Z))\bigr)=\Ad (m)\varphi+ p_{\mathfrak
k}^c \bigl(Y-\Ad (n'(y){\tilde a}(y)m)\theta
(Z)\bigr).\end{equation*} Hence, using Lemma 3.1, we see that
Equation (b) is equivalent to
\begin{equation*} \left \lbrace \begin{aligned} (c)&\quad \Ad
(m)\,\varphi=\xi_2   \\ (d)&\quad p_{\mathfrak k}^c \bigl(Y-\Ad
(n'(y){\tilde a}(y)m)\theta (Z)\bigr)=[\Ad (k^{-1})v,\xi_1].
\end{aligned}\right.\end{equation*}
Finally, we get $\varphi=\Ad (m^{-1})\xi_1$ and, by using Lemma 3.1
again, we see that there exists a unique element $Z$ of $\bar
{\mathfrak n}$ satisfying Equation (d). This proves the existence of
a unique element $(y,Z,\varphi)$ satisfying $\Psi
(y,Z,\varphi)=\xi$.
\par In the same way, we show that $\Psi_0$ takes values in
${\tilde O}(\xi_1,\xi_2)$ and we can conclude that $\Psi_0$ is a
bijection from ${\bar N}\times {\bar{\mathfrak n}}\times o(\xi_2)$
onto ${\tilde O}(\xi_1,\xi_2)$.
\par By following the same method as in the proof of Proposition
5.2, we show that $\Psi_0^{\ast}(\omega_0)=\omega_p$. Since the
2-form $\omega_p$ is non-degenerate, this also shows that $\Psi_0$
is regular. Finally, $\Psi_0$ is a symplectomorphism. \end{proof}

\par We can define the notion of symbols (P-symbols,
S-symbols) on $O(\xi_1,\xi_2)$ as in Section 5. Then we obtain the
following proposition which is analogous to Proposition 5.3.

\begin{proposition} Let ${\mathcal A}_0$ be the space of all
P-symbols on $ O(\xi _1,\xi_2)$ and let $\mathcal B$ be the space of
differential operators on $C^{\infty}({\bar N},E)$. Then the map
${\mathcal W}_0: {\mathcal A}_0\rightarrow {\mathcal B}$ defined by
the ${\mathcal W}_0(f)=W(f\circ \Psi_0 )$ is an adapted Weyl
correspondence in the sense of \cite{CaDGA}, Section 6.1.
\end{proposition}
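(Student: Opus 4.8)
The plan is to follow exactly the strategy used for Proposition 5.3, since the construction of $\mathcal{W}_0$ via $\Psi_0$ mirrors that of $\mathcal{W}$ via $\Psi$. The target statement has six properties (1)--(6) (as spelled out in Proposition 5.3, the reference to \cite{CaDGA}, Section 6.1), and most of them are formal consequences of the fact that $W$ is the Berezin-Weyl calculus on ${\bar N}\times{\bar{\mathfrak n}}\times o(\xi_2)$ and that $\Psi_0$ is a diffeomorphism onto the dense open subset ${\tilde O}(\xi_1,\xi_2)$.

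First I would take $D=C_0({\bar N},E)$ as the dense domain. Properties (1)--(4) are then essentially immediate: $\mathcal{W}_0(f)=W(f\circ\Psi_0)$ is a linear isomorphism because $f\mapsto f\circ\Psi_0$ is a bijection between P-symbols on $O(\xi_1,\xi_2)$ and P-symbols on ${\bar N}\times{\bar{\mathfrak n}}\times o(\xi_2)$ (by the definition of P-symbols on the orbit, using that $\Psi_0$ is a diffeomorphism by Proposition 6.2) and $W$ is a linear isomorphism onto the space $\mathcal{B}$ of differential operators on $C^\infty({\bar N},E)$; the formulas (4.2)--(4.4) show that differential operators preserve $C_0({\bar N},E)$ and are closed under composition, giving (2) and (4); the constant function $1$ is a P-symbol and $W(1)=I$ by (4.3), giving (3). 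Property (5) follows from part (2) of Proposition 4.1 exactly as in the proof of Proposition 5.3: complex conjugation of symbols corresponds to taking operator adjoints at the Berezin level, hence $\mathcal{W}_0(\bar f)$ is extended by $\mathcal{W}_0(f)^\ast$.

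The substantive point is property (6): the elements of $D$ must be $C^\infty$-vectors for $\pi_0$, the functions $\widetilde{(v,U)}(\xi)=\langle\xi,(v,U)\rangle$ on $O(\xi_1,\xi_2)$ must be P-symbols, and $\mathcal{W}_0(i\widetilde{(v,U)})\phi=d\pi_0(v,U)\phi$ must hold on $D$. This is precisely the content of Proposition 6.1: there it is shown that the Berezin-Weyl symbol of $-id\pi_0(v,U)$ is the P-symbol $f_{(v,U)}=\widetilde{(v,U)}\circ\Psi_0$, i.e. $W(f_{(v,U)}\circ\Psi_0^{-1}\circ\Psi_0)=W(f_{(v,U)})=-id\pi_0(v,U)$, which rearranges to $\mathcal{W}_0(i\,\widetilde{(v,U)})=W(i f_{(v,U)})=d\pi_0(v,U)$ on $C_0({\bar N},E)$. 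That $C_0({\bar N},E)$ consists of $C^\infty$-vectors for $\pi_0$ follows from the explicit formula (3.2) for $\pi_0$ together with smoothness of the Iwasawa projections. Thus I would simply write: ``With $D=C_0({\bar N},E)$, properties (1)--(4) hold as in Proposition 5.3, property (5) follows from part (2) of Proposition 4.1, and property (6) follows from Proposition 6.1.'' The main obstacle, if any, is merely bookkeeping: checking that the notion of P-symbol on $O(\xi_1,\xi_2)$ transported through $\Psi_0$ really does match the domain on which Lemma 4.3 and Proposition 6.1 operate, but this is guaranteed once Proposition 6.2 establishes that $\Psi_0$ is a symplectomorphism onto a dense open subset.

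\begin{proof} With $D=C_0({\bar N},E)$, properties (1)--(4) are verified exactly as in the proof of Proposition 5.3, using that $\Psi_0$ is a diffeomorphism onto ${\tilde O}(\xi_1,\xi_2)$ (Proposition 6.2) so that $f\mapsto f\circ\Psi_0$ identifies the P-symbols on $O(\xi_1,\xi_2)$ with the P-symbols on ${\bar N}\times{\bar{\mathfrak n}}\times o(\xi_2)$, and using Formulas (4.2)--(4.4). Property (5) is a consequence of part (2) of Proposition 4.1. Finally, property (6) follows from Proposition 6.1: the functions $\widetilde{(v,U)}$ are P-symbols on $O(\xi_1,\xi_2)$ since $\widetilde{(v,U)}\circ\Psi_0=f_{(v,U)}$ is a P-symbol, the elements of $D$ are $C^\infty$-vectors for $\pi_0$ by the explicit formula (3.2), and ${\mathcal W}_0(i\widetilde{(v,U)})\phi=W(if_{(v,U)})\phi=d\pi_0(v,U)\phi$ for each $\phi\in D$. \end{proof}
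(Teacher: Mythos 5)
Your proof is correct and follows exactly the route the paper intends: the paper states Proposition 6.3 without a written proof, remarking only that it is ``analogous to Proposition 5.3,'' and your argument is precisely that analogue, with $D=C_0(\bar N,E)$, properties (1)--(4) checked as before, property (5) from Proposition 4.1(2), and property (6) replaced by an appeal to Proposition 6.1 in place of Proposition 5.1.
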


\section {The Dooley-Rice contraction
revisited} In this section, we introduce the Dooley-Rice contraction
maps from $G_0$ to $G$ and we show how to use the results of the
previous sections in order to get a new version of Theorem 1 of
\cite{DoR2} for the \lq noncompact\rq\,  realizations of the
representations.

\par We consider the family of maps $c_r:\,G_0\rightarrow G$
defined by \begin{equation*}
c_r(v,k)=\exp_G(rv)\,k\end{equation*}for $v\in V$, $k\in K$ and
indexed by $r\in ]0,1]$. One can easily show that
\begin{equation*}\lim_{r\rightarrow
0}\,c_r^{-1}(c_r(g)\,c_r(g'))=g\,g'\end{equation*} for each
$g,\,g'\in G_0$. Then the family $(c_r)$ is a group contraction of
$G$ to $G_0$ in the sense of \cite{MN} (see also \cite{CaJAM}).

\par Let $(\xi_1,\xi_2)\in {\mathfrak g}_0$ as in Section 3. Recall
that $\pi_0$ is a unitary irreducible representation of $G_0$
associated with $(\xi_1,\xi_2)$. For each $r\in ]0,1]$, we set
$\xi_r:=(1/r)\xi_1+\xi_2$ and we denote by $\pi_r$ the principal
series representation of $G$ corresponding to $\xi_r$. Then we have
the following contraction result which is analogous to \cite{DoR2},
Theorem 1.
\begin{proposition} For each $(v,k)\in G_0$, $\phi \in C_0({\bar
N},E)$ and $y\in {\bar N}$, we have \begin{equation*}
\lim_{r\rightarrow 0}\,\pi_r(c_r(v,k))\phi\,(y)=\pi_0(v,k)\,\phi
(y).\end{equation*} \end{proposition}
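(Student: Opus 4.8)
The plan is to prove the pointwise convergence by directly comparing the explicit formula for the principal series representation $\pi_r$ (in the simplified form (2.3)) with the formula for the Cartan motion group representation $\pi_0$ (in the form (3.2)), after substituting $g=c_r(v,k)=\exp_G(rv)\,k$ and $\xi_r=(1/r)\xi_1+\xi_2$. The key is that $c_r(v,k)^{-1}=k^{-1}\exp_G(-rv)$, so that $c_r(v,k)^{-1}y=k^{-1}\exp_G(-rv)y$, and we must track the Iwasawa and $\bar N MAN$ decompositions of this element as $r\to 0$. First I would observe that as $r\to 0$ we have $\exp_G(-rv)\to e$, and more precisely $\exp_G(-rv)=e-rv+O(r^2)$ in any faithful matrix realization; consequently $k^{-1}\exp_G(-rv)y\to k^{-1}y$, and the decomposition factors $\bar n(c_r(v,k)^{-1}y)$, $m(c_r(v,k)^{-1}y)$, $a(c_r(v,k)^{-1}y)$, $\tilde a(c_r(v,k)^{-1}y)$ converge respectively to $\bar n(k^{-1}y)$, $m(k^{-1}y)$, $e$, $\tilde a(k^{-1}y)$, using continuity of the decomposition maps on the open dense set $\bar N MAN$ and $K$-invariance of the relevant factors (note $\bar n(k^{-1}y)=k^{-1}\cdot y$ in the notation of Section 3, and that $a(k^{-1}y)=e$, $m(k^{-1}y)^{-1}=m(k,y)$ as computed after (3.1)).

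The only subtle term is the phase $e^{i\nu_r(\log\tilde a(y)-\log\tilde a(g^{-1}y))}$ where $\nu_r=\beta((1/r)\xi_1,\cdot)=(1/r)\beta(\xi_1,\cdot)$; since the $(1/r)$ blows up, we need the difference $\log\tilde a(y)-\log\tilde a(c_r(v,k)^{-1}y)$ to vanish to first order in $r$ and to compute its derivative at $r=0$. The plan here is to apply part 2) of Lemma 2.1: writing $c_r(v,k)^{-1}y=k^{-1}\exp_G(-rv)y$ and differentiating in $r$ at $r=0$, the factor $k^{-1}$ contributes nothing to $\tilde a$ (since $k^{-1}\in K$ and $\tilde a(k^{-1}h)=\tilde a(h)$... actually one must be slightly careful and instead differentiate $\tilde a(\exp_G(-rv)\,k^{-1}y)$ using $c_r(v,k)^{-1}=k^{-1}\exp_G(-rv)$ rewritten, or use left-invariance properties), yielding by Lemma 2.1(2) that
\[
\frac{d}{dr}\Big|_{r=0}\log\tilde a(c_r(v,k)^{-1}y)=-\,\tilde p_{\mathfrak a}\bigl(\Ad(\tilde k(k^{-1}y)^{-1})v\bigr)
\]
up to signs and the right-invariant-vector-field bookkeeping at $\tilde a(y)$. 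Combining with $\log\tilde a(c_0(v,k)^{-1}y)=\log\tilde a(k^{-1}y)=\log\tilde a(y)$ (the last equality from the identity displayed just before (3.2)), a first-order Taylor expansion gives
\[
\nu_r\bigl(\log\tilde a(y)-\log\tilde a(c_r(v,k)^{-1}y)\bigr)\ \longrightarrow\ \beta\bigl(\xi_1,\tilde p_{\mathfrak a}(\Ad(\tilde k(y)^{-1})v)\bigr)=\beta\bigl(\Ad(\tilde k(y))\xi_1,v\bigr),
\]
where the last step uses $\Ad$-invariance of $\beta$ together with $\xi_1\in\mathfrak a$ and the fact that $\beta$ pairs $\mathfrak a$ trivially with $\mathfrak k\oplus\mathfrak n$ — exactly the phase appearing in (3.2).

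Assembling the pieces: the $\sigma$-factor converges to $\sigma(m(k^{-1}y))^{-1}=\sigma(m(k,y))$; the $\rho$-factor $e^{-\rho(\log a(c_r(v,k)^{-1}y))}$ converges to $e^{-\rho(\log a(k^{-1}y))}=1$, or more precisely to $e^{\rho(\log\tilde a(k^{-1}\cdot y)-\log\tilde a(y))}$ after matching with (3.1); the argument $\phi(\bar n(c_r(v,k)^{-1}y))$ converges to $\phi(k^{-1}\cdot y)$ by continuity of $\phi\in C_0(\bar N,E)$; and the phase converges as above to $e^{i\beta(\Ad(\tilde k(y))\xi_1,v)}$. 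Multiplying these limits reproduces formula (3.2) for $\pi_0(v,k)\phi(y)$, completing the proof. The main obstacle is the careful first-order analysis of the blown-up phase term — controlling $\nu_r(\log\tilde a(y)-\log\tilde a(c_r(v,k)^{-1}y))$ requires not just convergence but the exact first-order coefficient, which is precisely what Lemma 2.1(2) is designed to supply; everything else is continuity of the Iwasawa and $\bar N MAN$ decomposition maps on their domains of definition.
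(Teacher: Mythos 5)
Your proposal follows essentially the same route as the paper's proof: both reduce the claim to the first-order expansion of the phase $\nu_r\bigl(\log\tilde a(y)-\log\tilde a(k^{-1}\exp(-rv)y)\bigr)$ and apply Lemma 2.1(2) to compute $\tfrac{d}{dr}\big\vert_{r=0}\log\tilde a(k^{-1}\exp(-rv)y)=-\tilde p_{\mathfrak a}\bigl(\Ad(\tilde k(y)^{-1})v\bigr)$, with the remaining factors handled by continuity of the $\bar N MAN$-decomposition. One small slip to note: you assert $a(k^{-1}y)=e$, which is false in general; the correct relation (which you then invoke anyway) is $\log a(k^{-1}y)=\log\tilde a(y)-\log\tilde a(k^{-1}\cdot y)$, so the $\rho$-factor converges to the one in (3.2) without being trivial, and the intermediate derivative formula should carry an $\Ad(k^{-1})$ before simplifying via $\tilde k(k^{-1}y)=k^{-1}\tilde k(y)$.
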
 \begin{proof} By taking into
account the explicit expressions for $\pi_r$ and $\pi_0$ given in
Section 2 and Section 3 (Formulas (2.3) and (3.2)), we have just to
verify that
\begin{equation*}\lim_{r\rightarrow 0}\,{\frac {1}{r}}\beta
\Bigl(\xi_1\,,\,\log {\tilde a}(y)-\log {\tilde a}(k^{-1}\exp
(-rv)y)\Bigr)=\beta \bigl( \Ad ({\tilde k}(y))\xi_1,v
\bigr).\end{equation*} But applying Lemma 2.1 we have
\begin{align*} {\frac {d}{dt}}\,&{\tilde a}(y)^{-1}{\tilde a}
(k^{-1}\exp (-rv)y)\vert_{t=0} \\&= {\frac {d}{dt}}\,{\tilde
a}^{-1}(k^{-1}y){\tilde a} (\exp (-r\Ad
(k^{-1})v)k^{-1}y)\vert_{t=0}\\&=-{\tilde p}_{\mathfrak a}\bigl(
\Ad( {\tilde k}(k^{-1}y)^{-1})\Ad (k^{-1})v\bigr)\\&=-{\tilde
p}_{\mathfrak a}\bigl(\Ad ({\tilde k}(y)^{-1})\,v \bigr).\\
\end{align*} Then we obtain
\begin{align*}\lim_{r\rightarrow 0}\,&{\frac {1}{r}}\beta
\Bigl(\xi_1\,,\,\log {\tilde a}(y)-\log {\tilde a}(k^{-1}\exp
(-rv)y)\Bigr)=\beta \bigl( \xi_1\,,\,{\tilde p}_{\mathfrak
a}\bigl(\Ad ({\tilde k}(y)^{-1})\,v \bigr)\\&=\beta \bigl(\xi_1\,,\,
\Ad ({\tilde k}(y)^{-1})\,v \bigr)=\beta \bigl(\Ad ({\tilde
k}(y))\,\xi_1\,,\,v\bigr).\\ \end{align*} The result follows.
\end{proof}

\section {Contraction of adapted Weyl correspondences}
For each $r\in ]0,1]$, we denote by $\Psi_r$ the symplectomorphism
from ${\bar N}\times {\bar {\mathfrak n}}\times o(\xi_2)$ onto
${\tilde O}(\xi_r)$ introduced in Section 5 and by ${\mathcal W}_r$
the adapted Weyl correspondence on $O(\xi_r)$. In this section, we
show how the symplectomorphisms $\Psi_r$ contract to the
symplectomorphism $\Psi_0: \,{\bar N}\times {\bar {\mathfrak
n}}\times o(\xi_2)\rightarrow {\tilde O}(\xi_1,\xi_2)$ and how the
correspondences ${\mathcal W}_r$ contract to ${\mathcal W}_0$.

\par For each $r\in ]0,1]$, we denote by $C_r$ the differential of
$c_r$. Then the family $(C_r)$ is a contraction of Lie algebras from
${\mathfrak g}$ onto ${\mathfrak g}_0$, that is,
\begin{equation*}\lim_{r\rightarrow 0}\,C_r^{-1}\bigl(
[C_r(X)\,,\,C_r(Y)]\bigr)=[X\,,\,Y]_0\end{equation*} for each $X,\,Y
\in {\mathfrak g}_0$. We also denote by $C_r^{\ast}:{\mathfrak
g}^{\ast}\simeq {\mathfrak g}\rightarrow {\mathfrak
g}_0^{\ast}\simeq {\mathfrak g}_0$ the dual map of $C_r$.

\begin{proposition} For each $(y,Z,\varphi)\in {\bar N}\times {\bar {\mathfrak
n}}\times o(\xi_2)$, we have
\begin{equation*}\lim_{r\rightarrow 0}\,C_r^{\ast}\bigl(\Psi_r
(y,Z,\varphi)\bigr)=\Psi_0(y,Z,\varphi).\end{equation*}\end{proposition}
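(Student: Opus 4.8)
The plan is to compute $C_r^{\ast}$ explicitly and then substitute the formula for $\Psi_r$ from Proposition 5.1, tracking the powers of $r$. First I would identify the differential $C_r$ of $c_r(v,k)=\exp_G(rv)k$. Differentiating at the identity, $C_r(w,U) = rw + U$ for $(w,U)\in {\mathfrak g}_0 = V\times{\mathfrak k}$, viewed inside ${\mathfrak g}={\mathfrak k}\oplus V$. Since the Killing form $\beta$ restricts to a positive-definite form on $V$, a negative-definite form on ${\mathfrak k}$, and makes $V\perp{\mathfrak k}$, the identifications ${\mathfrak g}^{\ast}\simeq{\mathfrak g}$ and ${\mathfrak g}_0^{\ast}\simeq{\mathfrak g}_0$ are the ones induced by $\beta$ (on ${\mathfrak g}$) and by the pair $(\beta|_V,\beta|_{\mathfrak k})$ (on ${\mathfrak g}_0$). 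A short computation with these identifications gives, for $\xi = \xi^V + \xi^{\mathfrak k}\in {\mathfrak g}$ with $\xi^V = p^c_V(\xi)$, $\xi^{\mathfrak k}=p^c_{\mathfrak k}(\xi)$, the formula $C_r^{\ast}(\xi) = \bigl(r\,p^c_V(\xi)\,,\,p^c_{\mathfrak k}(\xi)\bigr)$.

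Next I would write down $\Psi_r$. By Proposition 5.1 applied to $\xi_r = (1/r)\xi_1 + \xi_2$ (so the regular element is $(1/r)\xi_1$ and the element of ${\mathfrak m}$ is $\xi_2$), we have
\begin{equation*}
\Psi_r(y,Z,\varphi) = \frac{1}{r}\,\Ad({\tilde k}(y))\,\xi_1 + \Ad(y)(\varphi-\theta(Z)).
\end{equation*}
Here one must be careful: ${\tilde k}(y)$, the Iwasawa $K$-part of $y\in\bar N$, does not depend on $r$ at all (it depends only on $y$), and similarly $\Ad(y)(\varphi-\theta(Z))$ is $r$-independent. So the only $r$-dependence is the explicit factor $1/r$ in front of the first term. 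Now apply $C_r^{\ast}$, using its component form above and the fact that $\Ad({\tilde k}(y))\xi_1 \in V$ since $\xi_1\in V$ and ${\tilde k}(y)\in K$ normalizes $V$ under the adjoint action (the Cartan decomposition is $\Ad(K)$-stable). Thus $p^c_V$ kills nothing in the first term, and:
\begin{align*}
C_r^{\ast}\bigl(\Psi_r(y,Z,\varphi)\bigr) &= \Bigl(r\,p^c_V\bigl(\tfrac{1}{r}\Ad({\tilde k}(y))\xi_1 + \Ad(y)(\varphi-\theta(Z))\bigr),\\
&\qquad\qquad p^c_{\mathfrak k}\bigl(\tfrac{1}{r}\Ad({\tilde k}(y))\xi_1 + \Ad(y)(\varphi-\theta(Z))\bigr)\Bigr).
\end{align*}
In the first component the $r\cdot\tfrac{1}{r}$ cancels to leave $\Ad({\tilde k}(y))\xi_1$, while $r\,p^c_V(\Ad(y)(\varphi-\theta(Z))) \to 0$ as $r\to 0$. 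In the second component $p^c_{\mathfrak k}(\tfrac{1}{r}\Ad({\tilde k}(y))\xi_1) = 0$ identically, because $\Ad({\tilde k}(y))\xi_1\in V$ and $V\cap{\mathfrak k}=0$, so that term is simply absent; what survives is $p^c_{\mathfrak k}(\Ad(y)(\varphi-\theta(Z)))$. Passing to the limit therefore yields exactly $\Psi_0(y,Z,\varphi) = \bigl(\Ad({\tilde k}(y))\xi_1,\ p^c_{\mathfrak k}(\Ad(y)(\varphi-\theta(Z)))\bigr)$, as claimed.

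The main point requiring care — the only genuine obstacle — is getting the duality and the identifications right: one must verify that under the Killing-form identifications the dual map $C_r^{\ast}$ really is $(r\,p^c_V,\ p^c_{\mathfrak k})$ rather than, say, $(\tfrac{1}{r}p^c_V, p^c_{\mathfrak k})$ or something with a sign, and one must use that $V$ and ${\mathfrak k}$ are $\beta$-orthogonal (so the decomposition ${\mathfrak g}_0^{\ast}\simeq V^{\ast}\times{\mathfrak k}^{\ast}$ matches the Cartan decomposition of ${\mathfrak g}$). Once this bookkeeping is pinned down, the limit is immediate since, as noted, the only $r$-dependence on the left-hand side is the explicit factors of $r$ and $1/r$, which either cancel or send a bounded term to zero.
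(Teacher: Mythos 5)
Your proof is correct and follows essentially the same route as the paper: you identify $C_r(v,U)=rv+U$, observe that only the explicit $1/r$ in $\Psi_r$ depends on $r$, and then apply $C_r^{\ast}$ using the $\beta$-orthogonality of $V$ and $\mathfrak k$. The paper phrases this as a limit of the pairing $\langle C_r^{\ast}(\Psi_r(y,Z,\varphi)),(v,U)\rangle=\langle\Psi_r(y,Z,\varphi),C_r(v,U)\rangle$ against an arbitrary $(v,U)\in\mathfrak g_0$ rather than first writing out the componentwise formula $C_r^{\ast}=(r\,p^c_V,\,p^c_{\mathfrak k})$, but this is the same computation.
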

\begin{proof} Let $(v,U)\in {\mathfrak g}_0$. Since $\mathfrak k$ and $V$ are orthogonal
with respect to $\beta$, we have
\begin{align*} \langle C_r^{\ast}&\bigl(\Psi_r
(y,Z,\varphi)\bigr)\,,\,(v,U)\rangle=\langle \Psi_r
(y,Z,\varphi)\,,\,C_r(v,U)\rangle \\&=\langle (1/r)\Ad ({\tilde
k}(y))\xi_1+\Ad(y)(\varphi-\theta (Z))\,,\,rv+U \rangle
\\&=\beta \bigl(\Ad
({\tilde k}(y))\xi_1 ,v\bigl)+r\beta \bigl(p_V^c(\Ad
(y)(\varphi-\theta(Z)),v\bigr)\\&+\beta \bigl(p_{\mathfrak k}^c(\Ad
(y)(\varphi-\theta(Z)),U\bigr)\\ \end{align*} Then \begin{align*}
\lim_{r\rightarrow 0}\,\langle C_r^{\ast}&\bigl(\Psi_r
(y,Z,\varphi)\bigr)\,,\,(v,U)\rangle\\&=\langle \Ad ({\tilde
k}(y))\xi_1 +p_{\mathfrak k}^c\bigl(\Ad
(y)(\varphi-\theta(Z)\bigr)\,,\,(v,U)\rangle.\end{align*} Hence the
result.
\end{proof}
\par Now, let $f:\,O(\xi_1,\xi_2)\rightarrow {\mathbb C}$ be a
P-symbol of degree $d$, that is,
\begin{equation*}\widehat {f\circ \Psi_0}(y,Z)=\sum_{\vert
\alpha \vert \leq d}u_{\alpha}(y)Z^{\alpha} \end{equation*} where
each $u_{\alpha}$ is in $C^{\infty}({\bar N})$. Following
\cite{CDo}, we say that a family $f_r:\,O(\xi_r)\rightarrow {\mathbb
C}$ of symbols approximates $f$ if each $f_r$ is a P-symbol of
degree less than or equal to $d$, that is,
\begin{equation*}\widehat{f_r\circ \Psi_r}(y,Z)=\sum_{\vert
\alpha \vert \leq d}u_{\alpha}^r(y)Z^{\alpha} \end{equation*} and
if, for each $\alpha$, $u_{\alpha}^r-u_{\alpha}$ and all its
derivatives $\partial_{\gamma}(u_{\alpha}^r-u_{\alpha})$ converge
uniformly on compacts to zero, as $r\rightarrow 0$. Here, for each
$v\in C^{\infty}({\bar N})$ and each $\gamma
=(\gamma_1,\gamma_2,\ldots,\gamma_n)$, the derivative
$\partial_{\gamma}v$ is defined by
\begin{align*}\partial_{\gamma}v(y)=\left( \frac
{d}{dt_1}\right)^{\gamma_1}\left( \frac {d}{dt_2}\right)^{\gamma_2}
\ldots \left( \frac {d}{dt_n}\right)^{\gamma_n}&\bigl( v(y\exp
(t_1E_1)\exp (t_2E_2)\ldots\\&\ldots \exp (t_nE_n)\bigr)\bigr
\vert_{t_1=t_2=\ldots=t_n=0}.\end{align*} By using the properties of
the Berezin-Weyl calculus, we immediately obtain the following
proposition.
\begin{proposition} Let $f$ be a P-symbol on $O(\xi_1,\xi_2)$.
Let $(f_r)$ be a family of P-symbols which approximates $f$. Then,
for each $\phi \in C_0({\bar N},E)$ and each $y\in {\bar N}$, we
have
\begin{equation*}\lim_{r\rightarrow 0}\,({\mathcal W}_r(f_r)\phi) (y)=
({\mathcal W}_0(f)\phi) (y). \end{equation*} \end{proposition}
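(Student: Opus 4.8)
The plan is to reduce the convergence statement for the operators $\mathcal{W}_r(f_r)$ to the explicit formula for the Berezin-Weyl calculus on P-symbols, namely Formula (4.2), and then to exploit the approximation hypothesis componentwise. First I would write, for each $r$, the identity $\mathcal{W}_r(f_r)=W(f_r\circ\Psi_r)$ and use the hypothesis that $\widehat{f_r\circ\Psi_r}(y,Z)=\sum_{|\alpha|\le d}u^r_\alpha(y)Z^\alpha$ is polynomial in $Z$ of degree at most $d$, uniformly in $r$. By linearity of $W$ it suffices to treat a single monomial term $u^r_\alpha(y)z_1^{\alpha_1}\cdots z_n^{\alpha_n}$, for which Formula (4.2) gives
\begin{equation*}
(\mathcal{W}_r(f_r)\phi)(y)=(i\partial_{z_1})^{\alpha_1}\cdots(i\partial_{z_n})^{\alpha_n}\bigl(u^r_\alpha(y\exp Z/2)\,\phi(y\exp Z)\bigr)\bigl|_{Z=0},
\end{equation*}
and likewise for $\mathcal{W}_0(f)$ with $u_\alpha$ in place of $u^r_\alpha$. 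Subtracting, the difference $(\mathcal{W}_r(f_r)\phi)(y)-(\mathcal{W}_0(f)\phi)(y)$ is a finite sum (coming from the Leibniz rule applied to the product $u^r_\alpha(y\exp Z/2)\phi(y\exp Z)$) of terms each of which contains a factor of the form $\partial_\gamma(u^r_\alpha-u_\alpha)$ evaluated at a point of $\bar N$ depending continuously on $y$, multiplied by a bounded factor built from derivatives of $\phi$.

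The second step is to observe that for fixed $y\in\bar N$ and fixed $\phi\in C_0(\bar N,E)$, the points $y\exp Z/2$ and $y\exp Z$ appearing after differentiation all lie in a fixed compact neighbourhood of $y$ (indeed, only the value at $Z=0$ and finitely many derivatives at $Z=0$ enter, so only a germ at $y$ matters, but one can just as well work on a fixed compact ball in $\bar{\mathfrak n}$). The hypothesis that $(f_r)$ approximates $f$ says precisely that each $\partial_\gamma(u^r_\alpha-u_\alpha)\to 0$ uniformly on compacts as $r\to 0$. Hence every term in the difference above tends to zero, and since there are finitely many such terms (the number depending only on $d$, on $\alpha$, and on $\phi$ through the fixed degree of differentiation), we conclude $(\mathcal{W}_r(f_r)\phi)(y)\to(\mathcal{W}_0(f)\phi)(y)$.

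There is no serious obstacle here: the statement is essentially a continuity property of the Weyl quantization of polynomial symbols with respect to $C^\infty$-convergence of coefficients, and the whole point of the earlier sections was to set up the formula (4.2) which makes $W(f)$ a \emph{differential} operator with coefficients assembled from the $u_\alpha$ and their derivatives. The one place requiring a little care is bookkeeping in the Leibniz expansion: one must check that all the derivatives of $u^r_\alpha$ and of $u_\alpha$ that occur are of order at most $d$ (so that finitely many $\gamma$'s suffice) and that the accompanying factors involving $\phi$ — namely translates of $\phi$ and of its derivatives $\partial_\gamma\phi$ up to order $d$ — are bounded on the relevant compact set, which holds because $\phi\in C_0(\bar N,E)$. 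Once this is noted, the conclusion follows immediately by letting $r\to 0$ term by term. I would phrase the final write-up as: expand via (4.2) and Leibniz, bound the $\phi$-factors, invoke the approximation hypothesis on the $u^r_\alpha$, and pass to the limit in the finite sum.
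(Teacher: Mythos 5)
Your proof is correct and follows exactly the route the paper has in mind: the paper gives no explicit argument (it just says the proposition follows ``immediately'' from the properties of the Berezin-Weyl calculus), and the intended justification is precisely what you supply, namely expand $W(f_r\circ\Psi_r)$ term by term via formula (4.2) and the Leibniz rule, observe that the resulting finite sum has the form $\sum c_{\gamma,\gamma'}\,\partial_\gamma u^r_\alpha(y)\,\partial_{\gamma'}\phi(y)$ with $|\gamma|+|\gamma'|\le d$, and invoke the definition of ``approximates'' to pass to the limit in each coefficient.
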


Then we can deduce a contraction result for the derived
representations from the contraction of the symplectomorphisms
$\Psi_r$ to $\Psi_0$.
\begin{proposition} 1) Let $(v,U)\in {\mathfrak g}_0$. Then the family
$(\widetilde {C_r(v,U)})_{r\in ]0,1]}$ approximates $\widetilde
{(v,U)}$.

\noindent 2) For each $(v,U)\in {\mathfrak g}_0$, $\phi \in
C_0({\bar N},E)$ and $y\in {\bar N}$, we have
\begin{equation*} \lim_{r\rightarrow 0}\,\bigl( d\pi_r(C_r(v,U))\phi\bigr) \,(y)
=\bigl(d\pi_0(v,U)\phi\bigr)\,(y).\end{equation*}\end{proposition}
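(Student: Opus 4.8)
The plan is to prove both parts by combining the contraction of the symplectomorphisms (Proposition 8.1) with the compatibility between the Berezin-Weyl calculus $W$ and the correspondences $\mathcal W_r$, $\mathcal W_0$. For part 1), I would first recall from Proposition 6.1 that $\widetilde{(v,U)}\circ \Psi_0 = f_{(v,U)}$ is the P-symbol $(y,Z,\varphi)\mapsto \beta(\Ad(y)(\varphi-\theta(Z)),U)+\beta(\Ad(\tilde k(y))\xi_1,v)$, and similarly from Proposition 5.1 that $\widetilde{C_r(v,U)}\circ \Psi_r = f_{C_r(v,U)}$, where $f_X(y,Z,\varphi)=\beta(\Psi(y,Z,\varphi),X)$ with $\Psi_r(y,Z,\varphi)=(1/r)\Ad(\tilde k(y))\xi_1+\Ad(y)(\varphi-\theta(Z))$ playing the role of $\Psi$ for the orbit $O(\xi_r)$. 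Then for $X=C_r(v,U)=rv+U$, a direct computation — essentially the one already carried out in the proof of Proposition 8.1 — gives
\begin{equation*}
\widehat{\widetilde{C_r(v,U)}\circ\Psi_r}(y,Z)=\beta(\Ad(\tilde k(y))\xi_1,v)+r\,\beta(p_V^c\Ad(y)(\varphi-\theta(Z)),v)+\beta(p_{\mathfrak k}^c\Ad(y)(\varphi-\theta(Z)),U),
\end{equation*}
which is a polynomial in $z_1,\dots,z_n$ of degree $\le 1$ whose coefficients depend smoothly on $y$. Comparing with $\widehat{\widetilde{(v,U)}\circ\Psi_0}(y,Z)$, the difference is exactly $r\,\beta(p_V^c\Ad(y)(\varphi-\theta(Z)),v)$, whose coefficients (as a degree-$\le 1$ polynomial in $Z$) are smooth functions of $y$ multiplied by $r$. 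Since these coefficients and all their $\partial_\gamma$-derivatives converge to zero uniformly on compact subsets of $\bar N$ as $r\to 0$ (being $r$ times a fixed smooth function independent of $r$), the family $(\widetilde{C_r(v,U)})$ approximates $\widetilde{(v,U)}$ in the sense defined just before Proposition 8.2.

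For part 2), I would apply Proposition 8.2 to the P-symbol $f=i\widetilde{(v,U)}$ on $O(\xi_1,\xi_2)$ together with the approximating family $f_r=i\widetilde{C_r(v,U)}$ on $O(\xi_r)$, which is legitimate by part 1) and the linearity of the approximation notion. This yields
\begin{equation*}
\lim_{r\to 0}\,\bigl({\mathcal W}_r(i\widetilde{C_r(v,U)})\phi\bigr)(y)=\bigl({\mathcal W}_0(i\widetilde{(v,U)})\phi\bigr)(y)
\end{equation*}
for each $\phi\in C_0(\bar N,E)$ and $y\in\bar N$. But by property (6) of Proposition 5.3 applied to $G$ and the orbit $O(\xi_r)$ we have ${\mathcal W}_r(i\widetilde{C_r(v,U)})\phi=d\pi_r(C_r(v,U))\phi$, and by property (6) carried over to Proposition 6.4 we have ${\mathcal W}_0(i\widetilde{(v,U)})\phi=d\pi_0(v,U)\phi$. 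Substituting these two identities into the limit gives exactly the claimed formula $\lim_{r\to 0}(d\pi_r(C_r(v,U))\phi)(y)=(d\pi_0(v,U)\phi)(y)$.

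The only real subtlety — and the step I would be most careful about — is making sure the error term in part 1) genuinely has the required uniform-on-compacts convergence: one must check that $y\mapsto \beta(p_V^c\Ad(y)(\varphi-\theta(Z)),v)$, written out as a polynomial in $z_1,\dots,z_n$, has coefficient functions in $C^\infty(\bar N)$ that do not themselves depend on $r$, so that multiplying by $r$ and letting $r\to 0$ kills them together with all their derivatives $\partial_\gamma$. This is immediate from the explicit formula for $\Psi_r$, since the $r$-dependence of $\Psi_r$ is isolated entirely in the first summand $(1/r)\Ad(\tilde k(y))\xi_1$, which pairs only with the $V$-component $rv$ of $C_r(v,U)$ to produce the $r$-independent term $\beta(\Ad(\tilde k(y))\xi_1,v)$, while the $\Ad(y)(\varphi-\theta(Z))$ part pairs with $rv+U$ to give $r$ times a fixed function plus a fixed function. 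Everything else is a routine bookkeeping of the already-established Propositions 5.3, 6.4, 8.1 and 8.2, so no new computation of the kind done in Sections 5 and 6 is needed.
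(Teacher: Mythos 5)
Your proof is correct and follows the same route as the paper: part 1) is reduced to the explicit computation of $C_r^{\ast}(\Psi_r(y,Z,\varphi))$ from the proof of Proposition 8.1, and part 2) combines property (6) of the adapted Weyl correspondences (Propositions 5.3 and 6.3) with Proposition 8.2. You have merely written out the details that the paper leaves implicit; your isolation of the error term $r\,\beta(p_V^c\Ad(y)(\varphi-\theta(Z)),v)$ and the observation that its $\partial_\gamma$-derivatives vanish uniformly on compacts because the $r$-dependence is a scalar factor is exactly the content the paper's terse citation of Proposition 8.1 is meant to convey. One small slip: what you call ``Proposition 6.4'' is Proposition 6.3 in the paper.
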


\begin{proof} 1) This follows from Proposition 8.1.

\noindent 2) Taking Proposition 5.3 and Proposition 6.3 into
account, the result is an immediate consequence of Proposition 8.2.
\end{proof}

\end{document}